\newtheorem{theorem}{Theorem}
\newtheorem{lemma}[theorem]{Lemma}
\newtheorem{remark}{Remark}
\title{Mixed mode oscillations in a conceptual climate model}
\author[cornell]{Andrew Roberts \corref{cor}}
\ead{andrew.roberts@cornell.edu}
\address[cornell]{Cornell University, Department of Mathematics, \\ 310 Malott Hall, \\ Ithaca, NY 14853, USA}
\author[uhwo]{Esther Widiasih}
\ead{widiasih@hawaii.edu}
\address[uhwo]{University of Hawai'i - West O'ahu, Department of Mathematics, \\ 91-1001 Farrington Hwy, Library Room \\ 203 Kapolei, HI 96707, USA}
\author[usyd]{Martin Wechselberger}
\ead{wm@maths.usyd.edu.au}
\address[usyd]{University of Sydney, School of Mathematics and Statistics \\ School of Mathematics and Statistics F07 \\ University of Sydney NSW 2006, Australia}
\author[unc]{Chris K. R. T. Jones}
\address[unc]{University of North Carolina, Department of Mathematics, \\ Phillips Hall, CB \#3250, \\ Chapel Hill, NC 27599-3250, USA}
\ead{ckrtj@email.unc.edu}
\begin{document}
\begin{abstract}
Much work has been done on relaxation oscillations and other simple oscillators in conceptual climate models.  However, the oscillatory patterns in climate data are often more complicated than what can be described by such mechanisms.  This paper examines complex oscillatory behavior in climate data through the lens of mixed-mode oscillations.  As a case study, a conceptual climate model with governing equations for global mean temperature, atmospheric carbon, and oceanic carbon is analyzed.  The nondimensionalized model is a fast/slow system with one fast variable (corresponding to  ice volume) and two slow variables (corresponding to the two carbon stores).  Geometric singular perturbation theory is used to demonstrate the existence of a folded node singularity.  A parameter regime is found in which (singular) trajectories that pass through the folded node are returned to the singular funnel in the limiting case where $\epsilon = 0$.  In this parameter regime, the model has a stable periodic orbit of type $1^s$ for some $s>0$.  To our knowledge, it is the first conceptual climate model demonstrated to have the capability to produce an MMO pattern.
\end{abstract}

\begin{keyword}
mixed-mode oscillations \sep MMO \sep fast/slow \sep folded node \sep glacial cycle \sep paleoclimate
\end{keyword}

\maketitle

\section{Introduction}
There has been a significant amount of research aimed at explaining oscillations in various historical periods of the climate system.  Saltzman and Maasch have a series of papers on the Mid-Pleistocene transition, a change from oscillations with a dominant period of 40 kyr to oscillations with a dominant period of 100 kyr \cite{ms88, ms90, ms91}.  According to Saltzman and Maasch , the 40 kyr oscillations in the data result from a linear response to (quasi-)periodic changes in astronomical forcing.  They propose that the transition to the 100 kyr cycles occurs due to a Hopf bifurcation producing an attracting periodic orbit.  Paillard and Parrenin also seek to explain the Mid-Pleistocene transition and the glacial-interglacial cycles of the late Pleistocene, with a discontinuous and piecewise linear model \cite{pp04}. Their work, and the work of Hogg \cite{hogg}, use changes in astronomical forcing due to variation in the Earth's orbit to generate oscillations.  Crucifix \cite{crucifix} and Ditlevsen \cite{ditlevsen09} review oscillations in conceptual climate models.  In particular, Crucifix \cite{crucifix} discusses relaxation oscillators in ice-age models.  However, to our knowledge, the discussion is limited to single-amplitude or single mode oscillations.

\begin{figure}[t]
\begin{centering}
\includegraphics[width=\textwidth]{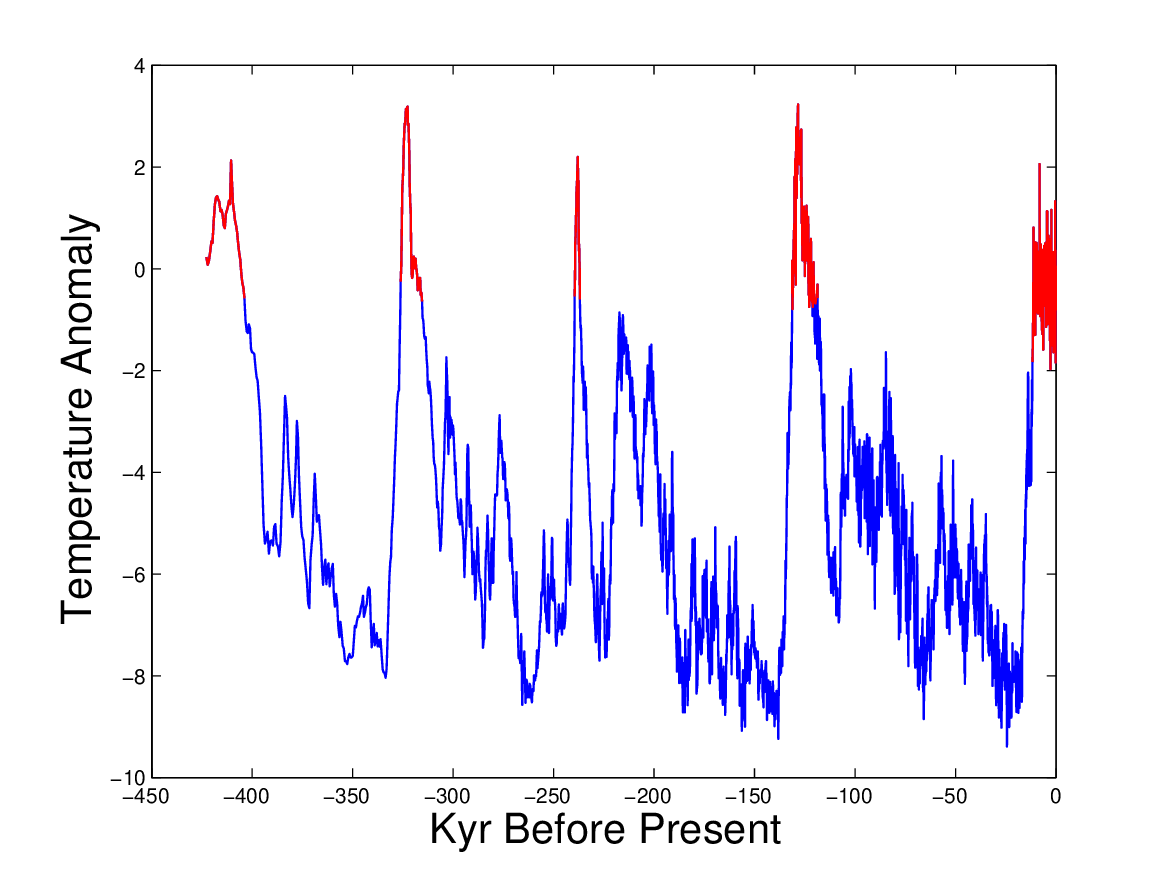}
\caption{Temperature anomaly obtained from the Vostok ice core deuterium record \cite{petit2000}.}
\label{mmoFig:tempRecord}
\end{centering}
\end{figure}

Looking at Figure \ref{mmoFig:tempRecord}, each 100 kyr cycle contains a sharp increase leading into the interglacial period (denoted by the red spikes).  This relaxation behavior clearly indicates the existence of multiple time-scales in the underlying problem.  There are also smaller, structured oscillations in the glacial state that are repeated in each 100 kyr cycle.  The presence of the large relaxation oscillation and the small amplitude oscillations indicates that these may be mixed-mode oscillations (MMOs)---a pattern of $L_1$ large amplitude oscillations (LAOs) followed by $s_1$ small amplitude oscillations (SAOs), then $L_2$ large spikes, $s_2$ small cycles, and so on.  The sequence ${L_1}^{s_1}{L_2}^{s_2}{L_3}^{s_3} \dots$ is known as the MMO signature \cite{mmosurvey}.  We propose that the 100 kyr glacial-interglacial cycles as well as the largest of the SAOs---i.e., the largest cycles that do not enter the interglacial state---can be interpreted as MMOs.  By suggesting that the SAOs result from intrinsic dynamics, we are proposing an alternative to the standard interpretation that attributes them to changes in astronomical forcing.  We note, however, that neither interpretation necessarily rules out the other---it may be possible to combine both intrinsic and forced oscillations.

This paper tests the scientific hypothesis that oscillatory behavior in climate data can be interpreted as MMOs, and we take the data in Figure \ref{mmoFig:tempRecord} as a case study.  Desroches et al. survey the mechanisms that can produce MMOs in systems with multiple time-scales \cite{mmosurvey}.  From the data set shown in Figure \ref{mmoFig:tempRecord}, we know that the underlying model has a multiple time-scale structure.  If we want to find MMOs, the model must have at least three state variables.  Assuming we can find a global time-scale splitting, there are three distinct ways to have a 3D model with multiple time scales: (a) 1 fast, 2 slow; (b) 2 fast, 1 slow; and (c) 1 fast, 1 intermediate, 1 slow (i.e., a three time-scale model).  Each of these options can create MMOs through different mechanisms.  Models with 1 fast and 2 slow variables can create MMOs through a folded node or folded saddle-node with a global return mechanism that repeatedly sends trajectories near the singularities.  Models with 1 slow and 2 fast variables can create MMOs through a delayed Hopf mechanism that also requires a global return.  MMOs in three time-scale models are reminiscent of MMOs due to a folded saddle-node type II---where one of the equilibria is a folded singularity ---although the amplitudes of the SAOs are more pronounced in this case.  

To verify our hypothesis, we have to strike a delicate balance.  The model needs to be complex enough to exhibit the desired behavior, but if it is too complex we will be unable to {\it prove} that it does so.  We know from the data shown in Figure \ref{mmoFig:tempRecord} that temperature shows relaxation behavior, rapidly oscillating between two meta-stable states.  Assuming that ice volume is strongly correlated with temperature, we view glacial cycles in the same way, i.e., oscillating between two meta-stable states.  The possibility of a bistable regime in ice volume has been discussed for decades, notably by Weertman \cite{weertman1976}, MacAyeal \cite{macayeal1979}, Oerlmans \cite{oerlemans1981}, Calov and Ganopolski \cite{calgan05}, Crucifix \cite{crucifix11}, and Abe-Ouchi {\it et al}. \cite{abeouchiEA13}.  Atmospheric carbon should also play a role in any model that describes glacial-interglacial cycles, as suggested by Saltzman and Maasch \cite{ms91} as well as Paillard and Parrenin \cite{pp04}.  We consider a physical, conceptual model that incorporates continental ice sheets, atmospheric carbon, and oceanic carbon.  Since this approach has never been used in a climate-based model, our desire is that the analysis is clear enough to replicate.  This is a major reason for our choice of such a simplistic 3D model.  Indeed, we omit time-dependent forcing such as Milankovitch cycles, leaving these effects to future work.  Even so, a minimal model is able to provide insight into key mechanisms behind the MMOs.  We include oceanic carbon as the third variable because the model was able to produce MMOs.  However, we were unable to find MMOs in other minimal models with, for example, deep ocean temperature. 

Our analysis will rely heavily on the model and ideas put forth by MacAyeal in \cite{macayeal1979} where the physical units---as well as the physical meaning of some parameters---are ambiguous.  His approach to explaining glacial cycles with a catastrophe model is similar to our MMO approach, without the benefit of 25 years of mathematical development.  Rather than using independently varying parameters as a means of generating slow dynamics, we couple MacAyeal's model with (simplified) carbon dynamics.  The main task is to obtain the ``global'' time-scale separation between the ice sheet evolution and the evolution of the carbon equations denoted by $\epsilon_1$ and $\epsilon_2$.  In general, a time-scale separation can be revealed through dimensional analysis.  The process should relate a small parameter $\epsilon_i$ to physical parameters of the dimensional model.  In applications such as neuroscience, it is often possible to get a handle on the ``smallness'' of the $\epsilon_i$ because there are accepted values or ranges for many of the physical parameters.  Unfortunately, parameters in paleoclimate models are not as constrained.  We rely on the intuition of physicists, geologists, and atmospheric scientists to determine a reasonable separation of time-scales.    

While it may be unsettling to not have a more concrete argument, the ambiguity regarding parameter values---and even the governing equations---allows more freedom.  With this in mind we take a different approach than that of others in the paleoclimate literature such as Saltzman and Maasch \cite{ms88, ms90, ms91}.  In the vast majority of climate science papers, the authors simulate models with judiciously chosen parameters.  Our approach is different in that we assume nothing about any parameters except that they are physically meaningful.  Then, through the analysis, we find conditions under which the model behaves qualitatively like the data.  The idea is not to pinpoint specific parameter values, but to find a range of possible parameters.  There are two advantages to this approach.  First, the parameter range can be used to constrain (or maybe constrain further) previous parameter estimates, which may tell us something previously unknown about the climate system.  It can be used to inform parameter choices for large simulations.  Second, a parameter range is useful to eliminate options.  That is, if the only parameter range which produces the correct qualitative behavior is entirely unreasonable, the model needs to be changed.

The outline of the paper is as follows: In section 2 we set up the model and provide relevant background from the paleoclimate literature.  Then we nondimensionalize the model and discuss assumptions on some of the parameters.  In particular, we identify our dimensionless model as a multiple time-scale problem.  We analyze this model in section 3, with a focus on finding conditions for MMOs.  We conclude with a discussion in section 4.

\section{Setting up the Model}
\subsection{The Physical Model}
We start with a model of the form
	\begin{align}
		\label{sie}
		\gamma \frac{dX_e}{dt} &= A_0 (B_0 - A) - B_1 X_e^3 + B_2 X_e \\
		\label{atmosCO2}
		\frac{dA}{dt} &= B_3 [P (X_e - X_e^*)^2 -  B_4 - A] - (L + B_5 A - B_6 H) \\
		\label{hydroCO2}
		\frac{dH}{dt} &= L + B_5 A - B_6 H.
	\end{align}
	
\begin{table} [b!] 
\begin{center}
\includegraphics[width=\textwidth]{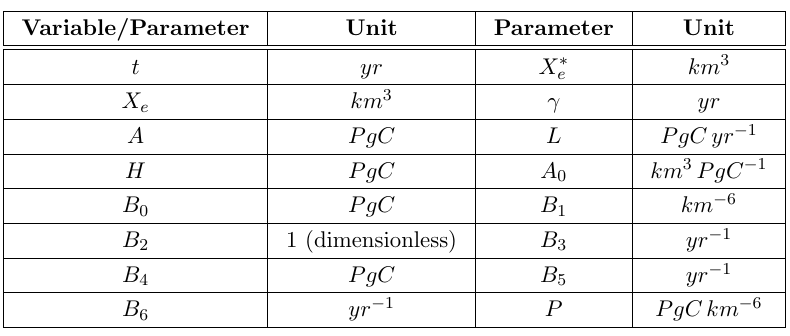}
\caption{Summary of the parameters, variables and their units.} \label{parametertable}
\end{center} 
\end{table}

$X_e$ is continental ice volume ($km^3$) offset from some mean value.  $A$ is PgC (Petagrams of Carbon) in the atmosphere, and $H$ is PgC in the mixed layer of the ocean.  Often atmospheric carbon is discussed as carbon concentration in the atmosphere in ppm (parts per million).  However when discussing land-atmosphere flux, as we will do, it makes sense to discuss carbon in terms of mass, hence the choice of PgC \cite{sigmanboyle}.  Equation \eqref{sie} is a variant of the equation used in MacAyeal's catastrophe model \cite{macayeal1979}.  The parameter $\gamma$ is a time-scale parameter that determines how quickly the ice-sheets relax to equilibrium.  In MacAyeal's formulation, $\gamma$ is actually a variable depending on $X_e$ and $t$, however it is assumed to be small and positive.  We have dropped the dependence, making $\gamma$ a true parameter.  The effect of atmospheric carbon and solar insolation is encapsulated by the term $A_0 (B_0 - A)$.  Parameters $B_1$ and $B_2$ are not given explicit, quantitative interpretations in \cite{macayeal1979}.  When $B_2$ is positive, the ice sheet dynamics may be in a bistable regime due to the cubic nature of equation \eqref{sie}.  This bistability plays an important role in demonstrating the capability for MMOs.  Our discussion of bistability differs slightly from MacAyeal's.  For our purposes, bistability in ice volume requires a separation of time-scales---corresponding to the relative evolution rates of ice sheet dynamics and the carbon pools---that will be made explicit in a dimensionless model.

Equation \eqref{atmosCO2} can be decomposed into two terms: the land-atmsophere flux,
$$B_3 [P (X - X_e^*)^2 -  B_4 - A],$$
and the ocean-atmosphere flux
$$L + B_5 A - B_6 H.$$
Notice that the ocean-atmosphere flux is balanced in equation \eqref{hydroCO2}.  That is, whatever carbon is outgassed from (or absorbed by) the ocean must be transferred to (or from) the atmosphere.  The ocean-atmosphere flux is an extremely complicated process.  First, the ocean has numerous carbon reservoirs of various sizes (e.g. the mixed layer and the deep ocean \cite{sigmanboyle}).  Changes in ocean circulation alter the amount of carbon that is pumped into the mixed layer versus what is stored in the deep ocean.  In the work of Paillard and Parrenin, the key nonlinearity in the carbon dynamics that drives the glacial-interglacial cycles occurs in the ocean-atmosphere component \cite{pp04}.  Additionally, the air-sea exchange of carbon is temperature-dependent since the solubility of $\text{CO}_2$ depends on temperature \cite{leq04}.  Acknowledging that this is a gross simplification, we assume that the carbon exchange between atmosphere and ocean follows a simple linear equation as described in \cite{siegsarm93}.  Approximating a slow nonlinear mechanism with a linear term has proved illuminating in systems with multiple time-scales.  One particular example from neuroscience is the FitzHugh-Nagumo approximation of the Hodgkin-Huxley equations \cite{fitzhugh1961,nagumo1962}.  Considering the role of neuroscience in the historical development of MMO theory, such an approximation seems natural.

The terrestrial, or land-atmosphere, flux depends on the lithosphere and the biosphere (among other things) \cite{landCO2}.  Adams and Faure \cite{adamsfaure1998}; Crucifix, Betts, and Hewitt \cite{cbh05}, K\"{o}hler and Fischer \cite{kf05}, and Lenton and Huntingford \cite{landCO2} all discuss a reduced terrestrial carbon pool at the last glacial maximum.  Our model adapts the formulation used by Lenton and Huntingford \cite{landCO2}, assuming that there is a critical ice sheet volume $X_e^*$(corresponding to a critical temperature) where carbon drawdown is most efficient.  Despite the consensus that there was a reduced terrestrial carbon pool at the last glacial maximum, the quantification of such is still debated \cite{kf05}.  The quantification debate is one reason for our hesitance to assign specific values to parameters.   

The reason there is no governing equation for terrestrial PgC is that the total carbon content of the system should be conserved.  While there can be subdivisions within them \cite{sigmanboyle}, we are considering three carbon stores: atmosphere, land, and ocean.  Since there is a conserved quantity, only the two governing equations are needed.

\subsection{The Mathematical (Dimensionless) Model}
In an effort to simplify calculations, we set 
$$X = - X_e,$$
so $X$ grows as temperature increases, allowing for an easier comparison with Figure \ref{mmoFig:tempRecord}.   Note that Similarly, we introduce
$$X_* = - X_e^*.$$
In terms of the variables $X, A, H$, the system \eqref{sie}-\eqref{hydroCO2} becomes
	\begin{align}
		\label{sie2}
		\gamma \frac{dX}{dt} &= - \gamma \frac{dX_e}{dt} = A_0 (A-B_0) - B_1 X^3 + B_2 X \\
		\label{atmos2}
		\frac{dA}{dt} &= B_3 [P (X - X_*)^2 -  B_4 - A] - (L + B_5 A - B_6 H) \\
		\label{hydro2}
		\frac{dH}{dt} &= L + B_5 A - B_6 H.
	\end{align}

Secondly, based on the observation made in Figure \ref{mmoFig:tempRecord}, the model \eqref{sie2}-\eqref{hydro2} should evolve on multiple time-scales.  Such a separation of time-scales can only be identified in a dimensionless model.  Therefore, we define the dimensionless quantities
\begin{equation*}
	\begin{array}{cccc}
		\displaystyle{x = \frac{X}{X_c}}, & \displaystyle{y = \frac{A}{A_c}}, & \displaystyle{z=\frac{H}{H_c}}, \text{ and} & \displaystyle{s = \frac{t}{t_c}}
	\end{array}
\end{equation*}  
where
\begin{align*}
		X_c &= \sqrt{ \frac{B_2}{3 B_1} }, &
		A_c &=\frac{B_2}{3 A_0} X_c ,  \\
		H_c &= \frac{B_5 A_c}{B_6}, &
		t_c &= \frac{3 \gamma}{B_2}.
\end{align*}  
Then equations \eqref{sie2}-\eqref{hydro2} become 
\begin{align}
	\label{xp}
	\epsilon \dot{x} &= y - x^3 + 3x - k \\
	\label{yp}
	\dot{y} &=  p (x-a)^2 - b - my - (\lambda + y) +z \\
	\label{zp}
	\dot{z} &= r(\lambda + y - z),
\end{align}
where the dot ($\dot{ \ \ }$) denotes $\frac{d}{ds}$.  The new dimensionless parameters relate to the physical parameters of equations \eqref{sie2}-\eqref{hydro2} in the following way:
\begin{equation*}
	\begin{array}{cccc}
		\displaystyle{ k = \frac{3 B_0 A_0}{B_2 X_c} }, & \displaystyle {p =  \frac{3 P B_3 A_0 X_c}{B_2 B_5} } & \displaystyle {a = \frac{X_*}{X_c} }, & \displaystyle{ b = \frac{B_3 B_4}{B_5 A_c} }   \\  \noalign{\bigskip}
		\displaystyle{m = \frac{B_3}{B_5} }, & \displaystyle{ \lambda = \frac{L}{B_5 A_c} }, & \displaystyle { r = \frac{B_6}{B_5} }, \text{ and} & \displaystyle{ \epsilon = \frac{3 \gamma B_5}{B_2} }.
	\end{array}
\end{equation*}
Any time-scale separation is determined by $\epsilon_1 = \epsilon$ and $\epsilon_2 = \epsilon r$.  As mentioned earlier, parameter values in paleoclimate problems are the subject of some debate.  In accordance with our observation based on Figure \ref{mmoFig:tempRecord}, we assume that temperature evolves on a faster time-scale than carbon, implying $0< \epsilon \ll 1$.  If $r = \mathcal{O}(1)$ we have 1 fast and 2 slow variables, and if $r \ll 1$ we are in the three time-scale case.  Depending on which parameters hold the key to having $0 < \epsilon \ll 1$, other parameters (e.g. $p$, $b$, or $\lambda$) may be small as well.  Again, our approach is to assume as little as possible about the parameters, so we will keep this in mind as we perform the analysis.

\begin{remark}
The dimensionless form of the model is a variant of the Koper model, an electrochemical model that is known to exhibit MMOs \cite{koper, kuehn11}.  Many other models in chemistry and neuroscience also demonstrate MMOs (e.g., modified Hodgkin-Huxley equations \cite{rw08}).  Indeed, many mechanisms in other areas such as mass balance in chemical reactions or gated ion channels in neural models behave similarly to certain climate mechanisms such as conservation of mass or exchange of carbon dioxide across the ocean-atmosphere surface.  
\end{remark}

The geometric theory for analyzing dynamical systems with multiple time scales---known as {\it Fenichel theory} or {\it geometric singular perturbation theory} (GSPT) \cite{fenichel79, gsp}---has provided powerful tools for studying singular perturbation problems such as system \eqref{xp}-\eqref{zp}.  Together, GSPT and {\it blow-up techniques} \cite{dumort96, ksRO, szmolwechs} provide rigorous results on global behavior such as relaxation oscillations \cite{sw04}.  Figure \ref{mmoFig:examples1} depicts another type of complex oscillator behavior called mixed-mode oscillations (MMOs).  Neurophysiological experiments are known to produce similar patterns \cite{amir02, dickson1998, gutfreund1995, khosrovani07, delnegro02}.  Recently, these complicated oscillations have been explained using canard theory \cite{wec07} in conjunction with an appropriate global return mechanism by exploiting the multiple time-scale nature of the underlying models \cite{benoit1981, benoit1983, brons, guckenheimer08, milik1998, szmolwechs, mw05, wec12}.  This is now one widely accepted explanation for MMOs; see, e.g., \cite{brons08, ddw09, mmosurvey}.

\begin{figure}
        \centering
           \begin{subfigure}[t]{0.4\textwidth}
                \includegraphics[width=\textwidth]{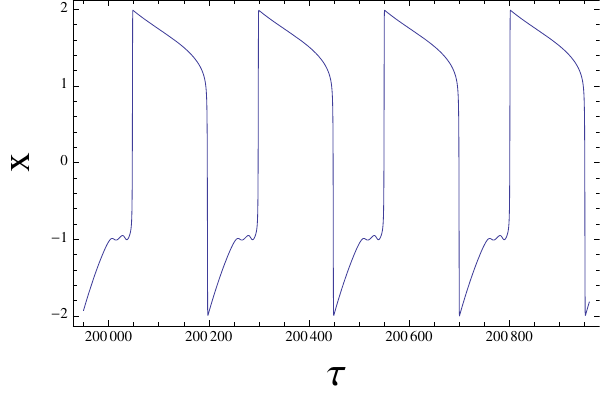}
                \caption{Stable MMO orbit with parameters $\epsilon = 0.01, \ a=0.8, \ p =3, \ b=2.1, \ k = 4, \ r=1, \ m=1,$ and $\lambda=1$.}
                \label{mmoFig:seriesE2}
        \end{subfigure}
       \qquad
       \begin{subfigure}[t]{0.4\textwidth}
                \includegraphics[width=\textwidth]{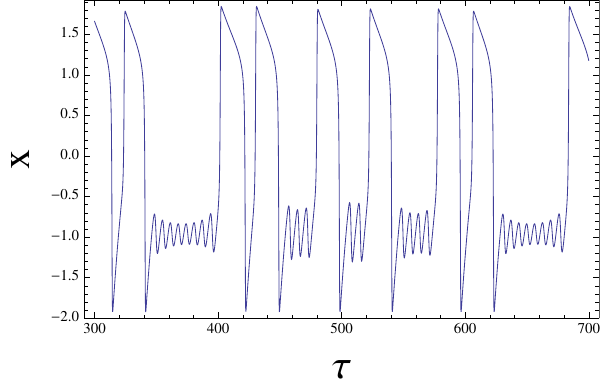}
                \caption{Stable MMO orbit with parameters $\epsilon = 0.1, \ a=0.8, \ p=3, \ b = 2, \ k =4, \ r = 0.05, \ m=1,$ and $\lambda = 1$.}
                \label{mmoFig:example3scales}
        \end{subfigure}
        \caption{Example of MMO patterns generated from the model \eqref{xp}-\eqref{zp}.}
        \label{mmoFig:examples1}
\end{figure}

\section{Analyzing the System}
In this section we will analyze the system \eqref{xp}-\eqref{zp}.  We assume that the system is singularly perturbed with singular perturbation parameter $\epsilon$.   We also assume that $r =\mathcal{O}(\epsilon^n)$ where $n = 0$ or $n=1$ (although fractional powers may be acceptable as well).  Hence we are using a 2 slow/1 fast approach that allows for the case where $r = \mathcal{O}(\epsilon).$  We will comment on the case where $r$ is small when appropriate.

The following quantities will appear often in our calculations, so we define
\begin{align*}
	h(x) &= x^3-3x+k,\\
	f(x)&= p(x-a)^2-b,
\end{align*}
as well as
$$ F(x,y) = y - h(x).$$

\subsection{The Layer Problem}
To begin the analysis, we rescale the time variable $s$ by $\epsilon^{-1}$ to obtain the system
\begin{align}
	\label{xpF}
	x' &= y- h(x) = y - x^3 + 3x - k \\
	\label{ypF}
	y' &= \epsilon [f(x) - m y - (\lambda + y) + z] = \epsilon [p (x-a)^2 - b - my - (\lambda + y) +z] \\
	\label{zpF}
	z' &= \epsilon r(\lambda + y - z),
\end{align}
where the prime (') denotes $d/d\tau$, and $\tau = \epsilon^{-1} s$ is the fast time-scale (while $s$ is the slow time-scale).  As long as $\epsilon > 0$, the new system \eqref{xpF}-\eqref{zpF} is equivalent to \eqref{xp}-\eqref{zp} in the sense that the paths of trajectories are unchanged---they are merely traced with different speeds.  However, in the singular limit (i.e. as $\epsilon \rightarrow 0$) the systems are different.

When $\epsilon = 0$, the system \eqref{xpF}-\eqref{zpF} becomes
\begin{align*}
	x' &= F(x,y) \\
	y' &= 0\\
	z' &= 0,
\end{align*}
which is called the {\it layer problem.}  Notice that the dynamics in the $y$ and $z$ directions are trivial.  
The {\it critical manifold},
$$M_0 = \{F(x,y) = 0\} = \{ y = h(x) \}, $$
is the set of critical points of the layer problem.  $M_0$ is attracting (resp. repelling) whenever $F_x < 0$ (resp. $F_x > 0$), which corresponds to the $x$-values where the cubic $h(x)$ is increasing (resp. decreasing).  A simple calculation shows $h'(x) = 0$ when $x = \pm 1$, so $M_0$ is attracting on the outer branches where $|x| > 1$, repelling on the middle branch where $|x|<1$ and folded at $x = \pm 1$.  To make this more explicit, $M_0$ is `S'-shaped with two attracting branches
$$M^{\pm}_A = \{ \pm x > 1 \} $$
and a repelling branch
$$M_R = \{ -1 < x < 1 \}.$$  The attracting and repelling branches are separated by the folds
$$L^{\pm} = \{ x = \pm 1 \}.$$
At the folds $L^{\pm}$, the critical manifold is degenerate and the basic GSPT theory for normally hyperbolic critical manifolds breaks down.  As is so often the case, the scientifically and mathematically interesting behavior arises where the standard theory does not apply.  In our case, the folds allow for more complicated dynamics such as relaxation oscillations or MMOs.  

\subsection{The Reduced Problem}
The layer problem, which describes the fast dynamics off the critical manifold, was obtained by considering the $\epsilon =0$ limit of equations \eqref{xpF}-\eqref{zpF}.  The dynamics on the critical manifold, or slow dynamics, are obtained by looking at the system \eqref{xp}-\eqref{zp} as $\epsilon \rightarrow 0.$  In the singular limit, the system becomes
\begin{align}
	\label{redM0}
	0 &= y - h(x)  \\
	\label{redY}
	\dot{y} &= f(x) - m y- (\lambda + y) +z   \\
	\label{redZ}
	\dot{z} &= r(\lambda + y - z).
\end{align}

The system \eqref{redM0}-\eqref{redZ} is called the {\it reduced problem}.  It is a differential algebraic system---i.e., a differential equation on the manifold $y = h(x,z)=h(x)$ which is a graph over the coordinate chart $(x,z)$.  As usual, we study manifolds in charts (i.e., in local coordinates), and here we have a single coordinate chart $(x,z)$ where we can study the whole reduced flow. This is done by differentiating the algebraic condition in \eqref{redM0}, and substituting it for the $\dot{y}$ equation \eqref{redY}.  Doing so produces
\begin{equation*}
		\begin{array}{rcl}
			-F_x \dot{x} &=& F_y \dot{y} + F_z \dot{z} = F_y \dot{y} \\
			\dot{z} &=& r ( \lambda + y - z ).
		\end{array}
	\end{equation*}
Substitution provides
	\begin{equation}
		\label{mmo2}
		\begin{array}{rcl}
			h'(x) \dot{x} &=& f(x)-(m+1) h(x) - \lambda + z \\
			\dot{z}& = &r ( \lambda + h(x) - z ),
		\end{array}
	\end{equation}
and we have obtained an expression for the reduced problem \eqref{redM0}-\eqref{redZ} as a system \eqref{mmo2} in the coordinate chart $(x,z)$. 

Notice that $h'(\pm 1) = 0.$  Points along the set $\{x=\pm1 \}$ are called fold points because they correspond to extrema of the critical manifold where it appears folded (denoted by red lines in Figure \ref{mmoFig:fullOrbit}).   Additionally, the system \eqref{mmo2} is singular at the folds because the coefficient of $\dot{x}$ is 0.  System \eqref{mmo2} has three different types of singularities:
\begin{itemize}
	\item ordinary singularities---these are equilibria of the full system \eqref{xp}-\eqref{zp} 
	\item regular fold points---also known as jump points, and
	\item folded singularities---isolated points along $L^{\pm}$ where the reduced flow changes orientation.
\end{itemize}

\begin{figure}
        \centering
           \begin{subfigure}[t]{\textwidth}
           	\centering
                \includegraphics[height=0.4\textheight]{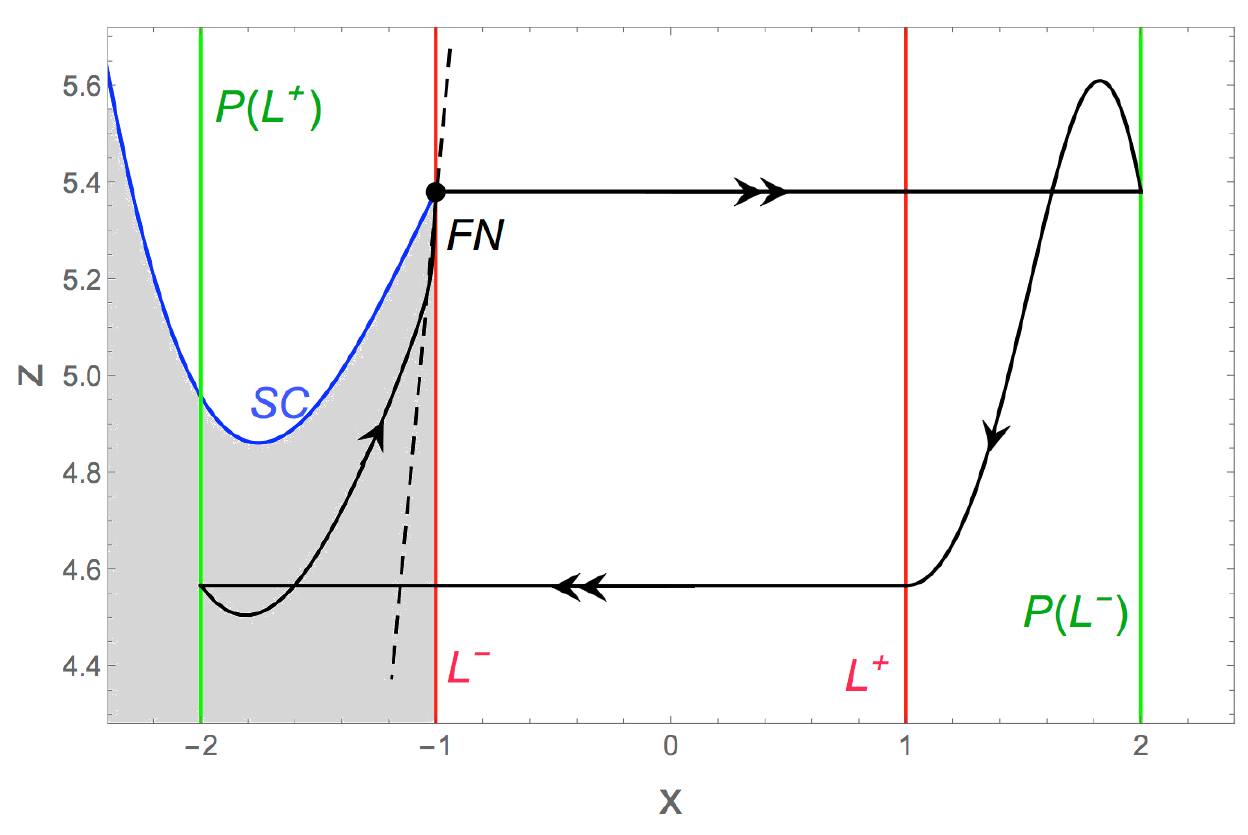}
                \caption{Projection of the singular orbit onto the critical manifold, with projections of the fold lines.  The funnel is the shaded region below the strong canard $\gamma_s$ (denoted SC).  The dashed line indicates the weak eigendirection along which trajectories inside the funnel approach FN.}
                \label{mmoFig:projectedOrbit}
        \end{subfigure}
       \\ 
       \begin{subfigure}[t]{\textwidth}
       		\centering
                \includegraphics[height=0.4\textheight]{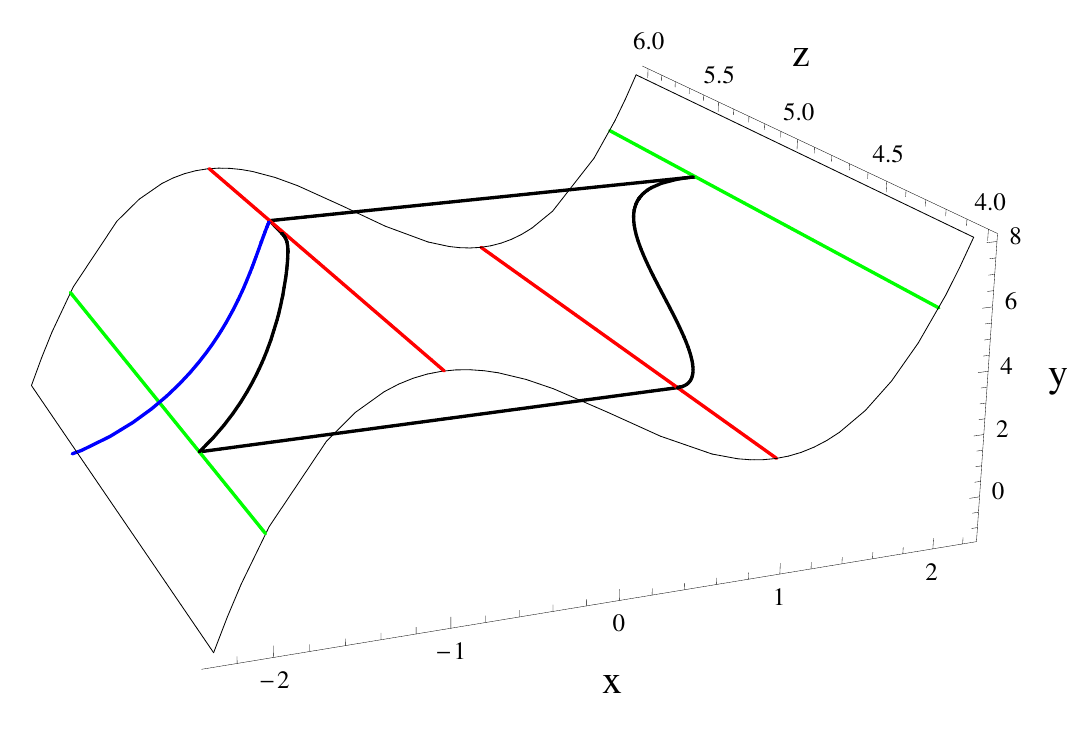}
                \caption{Singular orbit $\Gamma$ in the full 3D phase space.  Colored lines correspond to those in (a).}
                \label{mmoFig:fullOrbit}
        \end{subfigure}
        \caption{Example of a singular periodic orbit $\Gamma$ for $a=0.8$, $p=3$, $b=2.1$, $k=4$, $r=1$, $m=1$, and $\lambda=1$. Fold lines ($L^\pm$) and their projections ($P(L^\pm)$) can be computed explicitly.  Additionally, fast trajectories (denoted by double arrows) can be computed explicitly in the singular limit.  Slow trajectories (denoted by single arrows) are computed and plotted numerically.  Note that the strong canard (SC) is also a trajectory of the reduced problem and is found using a classic ``shooting method'' as implemented in, e.g. XPPAUT.  Here we have plotted a numerical approximation of the strong canard.}
        \label{mmoFig:singularOrbit}
\end{figure}

We can desingularize the system by rescaling the time variable $s$ by a factor of $h'(x)$, and we obtain
	\begin{equation}
		\label{desing}
		\begin{array}{l}
			\dot{x} = f(x)-(m+1) h(x) - \lambda + z \\
			\dot{z} = r h'(x) ( \lambda + h(x) - z ).
		\end{array}
	\end{equation} 
The rescaling reverses trajectories on $M_R$ because this is precisely the set where $h'(x) < 0$, and hence, the time variable is scaled by a negative factor.  However, the benefit of being able to define dynamics on the whole critical manifold---including the folds---outweighs the cost.  The folds themselves are transformed from singular sets where the dynamics were undefined into $z$ nullclines.  From system \eqref{desing}, it is now easy to classify the different singularities of the reduced problem: 
	\begin{itemize}
		\item Ordinary singularities occur where $f(x)-(m+1) h(x) - \lambda + z = 0$ and $ \lambda + h(x) - z = 0$.  That is, they are equilibria where $h'(x) \neq 0$.  
		\item Regular fold points are fold points that are not equilibria of \eqref{desing}. That is, $h'(x) = 0$, but $\dot{x} \neq 0.$  The red lines in Figures \ref{mmoFig:projectedOrbit}, \ref{mmoFig:zStar}, and \ref{mmoFig:m0pp} (with the exception of the points labeled `FN', `$Z_-$', and `$Z_+$') denote regular fold points.
		\item Folded singularities are equilibria of \eqref{desing} where $\dot{z}=0$ as a result of the rescaling.  That is, folded singularities occur where $h'(x) =0$, $\lambda + h(x) - z \neq 0$, and $\dot{x}=0$.  The points labeled `FN' in Figures \ref{mmoFig:projectedOrbit} and \ref{mmoFig:zStar} mark folded singularities.  Additionally, the points labeled `$Z_-$' and `$Z_+$' in Figure \ref{mmoFig:m0pp} are folded singularities.
	\end{itemize}
Similar to ordinary singularities (or ``standard'' equilibria), folded singularities can be classified by the eigenvalues of Jacobian evaluated at the folded singularity.  For example, a folded singularity with real, negative eigenvalues is a stable folded node, or if the Jacobian has real eigenvalues with opposite sign, it is a folded saddle.  The `FN' in Figures \ref{mmoFig:projectedOrbit} and \ref{mmoFig:zStar} indicate that the folded singularities are folded nodes.

\subsection{Canard Induced MMOs}

Folded nodes (as well as folded saddle-nodes) can produce MMOs with a suitable global return mechanism \cite{brons}.  A node (in a 2D system) has two real eigenvalues of the same sign, a weak eigenvalue $\mu_w$ and a strong eigenvalue $\mu_s$ such that $|\mu_w| < |\mu _ s|$.  Each eigenvalue corresponds to a trajectory that approaches the folded node tangent to the corresponding eigenvector.  The geometry of these special trajectories plays a key role in the return mechanism.  We denote the strong stable trajectory (corresponding to $\mu_s$) by $\gamma_s$, and similarly define the weak stable trajectory  $\gamma_w.$  

Since $\gamma_s$ is a trajectory of the slow dynamics, uniqueness of solutions prevents other slow trajectories from crossing it.  Thus, $\gamma_s$ partitions $M_A^-$ into two regions of trajectories.  In particular, $\gamma_s$ separates the trajectories that cross the fold before reaching the node from those that reach the fold at the node.  Aside from $\gamma_s$, the trajectories approaching a stable node stack up along the weak stable trajectory.  So, we see that trajectories in the region containing $\gamma_w$ will reach the fold at the node.  Trajectories on the opposite side of $\gamma_s$ will reach the fold first.
The region between the strong stable trajectory $\gamma_s$ and the fold $L^-$ that contains the weak stable trajectory $\gamma_w$ is called the {\it singular funnel} (denoted by the shaded region in Figures \ref{mmoFig:projectedOrbit} and \ref{mmoFig:zStar}), and we often refer to $\gamma_s$ as the boundary of the funnel. Since $\gamma_s$ is also called the strong canard, it is labeled 'SC' in Figures \ref{mmoFig:projectedOrbit} and \ref{mmoFig:zStar}.  All trajectories in this region will reach the fold at the folded node, while all trajectories on the other side of $\gamma_s$ cross the fold without reaching the node \cite{wec07}.

We establish a global return mechanism by constructing a singular periodic orbit $\Gamma$, consisting of heteroclinic orbits of the layer problem and a segment on each of those stable branches $M^{\pm}_A$.  The heteroclinic orbits of the layer problem take  trajectories from a fold $L^{\pm}$ to its projection $P(L^\pm)$ on the opposite stable branch.  An example of a singular periodic orbit $\Gamma$ is shown in Figure \ref{mmoFig:singularOrbit}.  Assuming there is a folded node on $L^-$ (without loss of generality), we can construct $\Gamma$ by following the fast fiber from the node to the stable branch $M_A^+$.  From there, the trajectory follows the slow flow on $M_A^+$ as described by \eqref{desing} until it reaches the fold $L^+$.  If it reaches $L^+$ at a jump point, we follow the fast fiber back to $M_A^-$.  We want the landing point on $M_A^-$ to be in the  funnel, since any singular orbit from the folded node that returns to the funnel will necessarily be a singular periodic orbit.

The eigenvalues are important for another reason as well.  The ratio of the eigenvalues,
$$\mu =  \frac{\mu_w}{\mu_s} < 1,$$
determines the number of small-amplitude oscillations in the MMO signature.  This is made explicit in the following theorem due to Br{\o}ns {\it et al} \cite{brons} that provides conditions under which a system has a stable MMO orbit.

\begin{theorem}
\label{thm:assumptions}
Suppose that the following assumptions hold in a fast/slow system,
	\begin{enumerate}[({A}1)]
		\item $0 < \epsilon \ll 1$ is sufficiently small with $\epsilon^{1/2} \ll \mu$
		\item the critical manifold is `S'-shaped, i.e. $M_0 = M_A^- \cup L^- \cup M_R \cup L^+ \cup M_A^+$, 
		\item there is a (stable) folded node $N$ on (without loss of generality) $L^-$, 
		\item there is a singular periodic orbit $\Gamma$ such that $\Gamma \cap M_A^-$ lies in the interior of the singular funnel to $N$, and
		\item $\Gamma$ crosses $L_{\pm}$ transversally.
	\end{enumerate}
	Then there exists a stable periodic orbit of MMO type $1^s$, where
	\begin{equation}
		\label{s}
		s = \left[ \frac{(1+ \mu)}{2\mu} \right],
	\end{equation}
	and the right-hand side of \eqref{s} denotes the the greatest integer less than $(1+ \mu)/(2\mu)$.
\end{theorem}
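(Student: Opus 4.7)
The plan is to combine the standard local analysis at a folded node with a global return argument; the five hypotheses split cleanly into a local piece (A1)--(A3) and a global piece (A4)--(A5), so I would tackle them in that order. First I would put the desingularized reduced system \eqref{desing} into folded-node normal form in a neighborhood of $N$ on $L^-$. Hypothesis (A3) guarantees that $N$ is a node of \eqref{desing}, so the eigenvalue ratio $\mu=\mu_w/\mu_s\in(0,1)$ is well defined and intrinsic. The key local tool is the Szmolyan--Wechselberger blow-up of the folded node, which desingularizes the non-hyperbolic fold for $\epsilon>0$ and produces Fenichel-type attracting and repelling slow manifolds $M_{A,\epsilon}^-$ and $M_{R,\epsilon}$ that can be continued past $L^-$ as locally invariant surfaces intersecting transversally along a family of canard trajectories.

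Second, I would use this blow-up to harvest the SAO count. A consequence of the folded-node analysis is that, under the separation condition $\sqrt{\epsilon}\ll\mu$ in (A1), the extensions of $M_{A,\epsilon}^-$ and $M_{R,\epsilon}$ twist around the weak canard $\gamma_w$ as they extend through the fold, producing exactly
\begin{equation*}
s = \left\lfloor \frac{1+\mu}{2\mu} \right\rfloor
\end{equation*}
secondary maximal canards that partition a neighborhood of $\gamma_w$ into $s$ rotational sectors. Any trajectory entering this neighborhood through the interior of the singular funnel is carried through the folded node, performs $s$ small-amplitude rotations about $\gamma_w$, and is then ejected along a fast fiber toward $M_A^+$. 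This yields the ``$s$'' in the $1^s$ signature.

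Third, I would assemble the global return. By (A4) the singular orbit $\Gamma$ lands, after the jump from $L^+$, in the interior of the funnel, and by (A5) both jumps occur at regular fold points, whose perturbation theory is the classical one of relaxation oscillations. Fenichel's theorem provides exponential attraction onto $M_{A,\epsilon}^\pm$ and $C^1$-closeness of perturbed orbits to their singular counterparts on the normally hyperbolic branches. Concatenating the local folded-node passage with the two global segments along $M_{A,\epsilon}^\pm$ and the two jumps gives a Poincar\'e return map $\Pi_\epsilon$ on a thin transverse strip inside the singular funnel on $M_A^-$, defined for all sufficiently small $\epsilon>0$.

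Fourth, I would establish stability by showing $\Pi_\epsilon$ is a strong contraction: the global slow passages along $M_{A,\epsilon}^\pm$ contract transverse distances by an exponentially small factor $\mathcal{O}(e^{-c/\epsilon})$, while the folded-node passage contracts by at worst $\mathcal{O}(\epsilon^{(1-\mu)/2})$ in the direction transverse to $\gamma_w$, and the regular-fold jumps are Lipschitz. The Banach contraction principle then yields a unique attracting fixed point, which corresponds to the claimed stable $1^s$ orbit. The main obstacle, in my view, is the local folded-node step: rigorously counting the rotational sectors and proving transversality of $M_{A,\epsilon}^-$ and $M_{R,\epsilon}$ inside the blown-up neighborhood requires the full Szmolyan--Wechselberger machinery (blow-ups to a sphere, matching across coordinate charts, and analysis of an inner Weber-type equation whose solutions govern the rotations). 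The Fenichel-based extension of the flow on the normally hyperbolic branches and the final contraction argument are comparatively routine once this local picture is in hand.
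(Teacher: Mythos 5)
The paper does not prove Theorem~\ref{thm:assumptions}; it is quoted as an established result due to Br{\o}ns, Krupa, and Wechselberger \cite{brons}, and the body of the paper is devoted to verifying hypotheses (A1)--(A5) for the specific climate model rather than to proving the theorem itself. Your outline is therefore being compared against the literature rather than against anything in this paper, and on that basis it is a faithful reconstruction of the standard argument: the Szmolyan--Wechselberger blow-up at the folded node, the resulting transversal intersection of the extended attracting and repelling slow manifolds, the count $s=\lfloor(1+\mu)/(2\mu)\rfloor$ of rotational sectors delimited by secondary canards (valid precisely under the non-resonance/separation condition $\sqrt{\epsilon}\ll\mu$ in (A1)), Fenichel theory plus regular-fold jump estimates for the global segments, and a contraction argument for the induced Poincar\'e map yielding a unique attracting $1^s$ orbit. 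One small caution worth flagging: stability of the MMO orbit is not automatic from the folded-node passage alone --- the passage through the funnel can be expansive in one direction near the secondary canards --- so the contraction really does rely on the landing point of $\Gamma$ being in the \emph{interior} of the funnel (hypothesis (A4)), which places the return strip a bounded distance from the sector boundaries where the exponential contraction along $M^{\pm}_{A,\epsilon}$ dominates; your sketch gets this right but it is the step most often glossed over.
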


In \cite{kw2010}, Krupa and Wechselberger show that the folded node theory still applies in the parameter regime where $\mu = \mathcal{O}(\epsilon^{1/2})$ if the global return mechanism is still intact (i.e $\Gamma \cap M_A^-$ lies in the interior of the singular funnel).  Note that in this parameter regime, the MMO signature can be more complicated.  Figure \ref{mmoFig:threeScales} depicts a few of the more interesting MMO patterns generated by \eqref{xp}-\eqref{zp} when $\mu = \mathcal{O}(\epsilon^{1/2})$.  

The remainder of this section will focus on finding conditions on the parameters of equations \eqref{xp}-\eqref{zp} so that the system satisfies $(A1)$-$(A5).$  Since $\mu$ is calculated in the singular limit, we can always choose $\epsilon$ small enough to satisfy condition $(A1)$.  Also, we have already discussed the `S'-shape of the critical manifold, demonstrating that condition $(A2)$ is satisfied.  The next task will be find conditions so that equations \eqref{xp}-\eqref{zp} have a folded node singularity.

\begin{figure}
        \centering
        \begin{subfigure}[b]{0.45\textwidth}
        \centering
                \includegraphics[width=\textwidth]{temp_timeseriesE2.pdf}
		\caption{Time series for $\epsilon = 0.01$.}
        \end{subfigure}%
        ~ 
        \begin{subfigure}[b]{0.45\textwidth}
               \includegraphics[width=\textwidth]{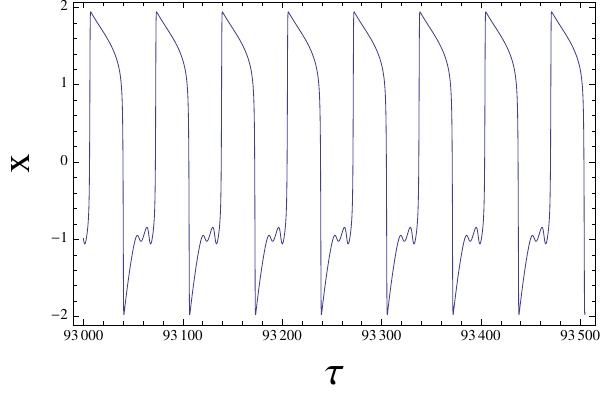}
		\caption{Time series for $\epsilon = 0.05$.}
        \end{subfigure}
        \caption{Time series for different values of $\epsilon$ when $a=0.8, \ p=3, \ b = 2.1, \ k = 4, \ r = 1, \ m= 1,$ and $\lambda=1$.}
        \label{mmoFig:timeseries}
\end{figure}

\subsection{Folded Node Conditions}	
The data in Figure \ref{mmoFig:tempRecord} show small amplitude oscillations occurring at low temperatures, so we seek parameters for which \eqref{mmo2} has a stable folded node along the lower fold $L^-$.  
\begin{lemma}
	\label{thm:node}
Define 
\begin{equation}
	\label{delta}
	\delta = f(-1) - m h(-1) = p (a+1)^2 - b - m(k +2).
\end{equation}  
Assume the parameters of the system \eqref{xp}-\eqref{zp} satisfy
	\begin{enumerate}[(a)]
		\item $p > 0$,
		\item $a > -1$,
		\item $\delta > 0,$ and
		\item $p^2 (a+1)^2 -6 r \delta > 0.$
	\end{enumerate}
Then there is a folded node at $(-1,z_-)$ where $$z_- = 2+ k + \lambda - \delta.$$
\end{lemma}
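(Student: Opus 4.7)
The plan is to locate the folded singularity by solving the defining algebraic system for \eqref{desing} at $x=-1$, then classify it by linearizing the desingularized vector field there and verifying that the resulting Jacobian has two negative real eigenvalues.

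First I would identify the folded singularities on $L^-$. From the classification stated just after \eqref{desing}, folded singularities are equilibria of the desingularized system arising from $h'(x)=0$ together with $f(x)-(m+1)h(x)-\lambda+z=0$ and the nondegeneracy condition $\lambda+h(x)-z\neq 0$. Since $h'(\pm 1)=0$, setting $x=-1$ and using $h(-1)=2+k$ gives
\begin{equation*}
z_{-}=(m+1)h(-1)+\lambda-f(-1)=(2+k)+\lambda-\bigl(f(-1)-mh(-1)\bigr)=2+k+\lambda-\delta,
\end{equation*}
which is the value claimed in the statement. The nondegeneracy $\lambda+h(-1)-z_{-}=\delta\neq 0$ is automatic from hypothesis (c).

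Next I would compute the Jacobian $J$ of \eqref{desing} at $(-1,z_{-})$. Using $h'(-1)=0$, $h''(-1)=-6$, $f'(-1)=-2p(a+1)$, and $\lambda+h(-1)-z_{-}=\delta$, one finds
\begin{equation*}
J=\begin{pmatrix} -2p(a+1) & 1 \\ -6r\delta & 0 \end{pmatrix},
\qquad \operatorname{tr} J=-2p(a+1),\qquad \det J=6r\delta.
\end{equation*}
The folded singularity is a (stable) node precisely when $\det J>0$, $\operatorname{tr} J<0$, and $(\operatorname{tr} J)^{2}-4\det J>0$. Hypotheses (a) and (b) give $\operatorname{tr} J<0$; hypothesis (c) together with the physical positivity $r>0$ gives $\det J>0$; and hypothesis (d) is exactly the discriminant condition $4p^{2}(a+1)^{2}-24r\delta>0$, guaranteeing two distinct real eigenvalues of the same sign. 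Hence both eigenvalues of $J$ are negative and real, so $(-1,z_{-})$ is a stable folded node of \eqref{mmo2}.

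There is no real obstacle here, since each condition (a)--(d) corresponds directly to one of the trace/determinant/discriminant inequalities once the Jacobian has been computed; the only mildly delicate point is remembering that ``folded node'' is classified via the desingularized system \eqref{desing} rather than \eqref{mmo2} (because $h'(-1)=0$ makes the reduced flow singular at the fold), and that the nondegeneracy $\delta\neq 0$ distinguishes a folded singularity from an ordinary one. Both issues are handled by the hypotheses as stated.
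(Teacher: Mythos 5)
Your proof is correct and takes essentially the same approach as the paper: locate the folded singularity by solving $h'(x)=0$ and $\dot x=0$ at $x=-1$, compute the Jacobian of the desingularized system there (using $h'(-1)=0$, $h''(-1)=-6$, $f'(-1)=-2p(a+1)$, and $\lambda+h(-1)-z_-=\delta$), and verify the trace, determinant, and discriminant conditions. The only small additions you make are to explicitly note the nondegeneracy $\lambda+h(-1)-z_-=\delta\neq 0$ distinguishing a folded from an ordinary singularity, which the paper leaves implicit.
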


\begin{proof}
The linearization of \eqref{desing} at any fixed point $(x_0,z_0)$ is
\begin{equation}
\label{lin}
J(x_0,z_0) = \left( \begin{array}{cc}
f'(x_0) - (m+1) h'(x_0) & 1 \\
r  [(h'(x_0))^2 + h''(x_0)   ( \lambda + h(x_0) - z_0)] & -r \ h'(x_0)
\end{array} \right).
\end{equation}
There is a folded singularity at $(-1, z_-)$ where $$z_- = (m+1) h(-1) + \lambda - f(-1).$$  Since $h'(-1)=0$, we have the linearization
\begin{equation}
	\label{jacobian}
		J(-1,z_-) = \left( \begin{array}{cc}
		f'(-1) & 1 \\
		-6r [f(-1) - m h(-1)]& 0
	\end{array} \right).
\end{equation}
For $(-1,z_-)$ to be a stable folded node, $J(-1,z_-)$ must satisfy three conditions:
\begin{enumerate}
	\item $\text{Tr}(J(-1,z_-))< 0$,
	\item $\det(J(-1,z_-)) > 0,$ and
	\item $[\text{Tr}(J(-1,z_-))]^2-4\det(J(-1,z_-)) > 0.$
\end{enumerate}
The requirement on the trace implies that $f'(-1) < 0,$ or $p (-1-a) < 0.$  Assuming $p > 0$, we arrive at condition (b) $a > -1$.  The requirement on the determinant gives us $6r[f(-1) - m h (-1)] > 0$.  Since $r > 0$, we will have $\det(J(-1,z_-)) > 0$ whenever $\delta = f(-1) - m h (-1)>0.$  That is precisely condition (c).  Conditions (a)-(c) are enough to guarantee that the folded equilibrium is stable, but they do not distinguish between stable a stable node or a stable focus.  This is determined by the discriminant condition, which is satisfied if 
$$p^2 (a+1)^2 - 6r [ f(-1) - m h(-1) ] = p^2 (a+1)^2 - 6r \delta > 0.$$ 
\end{proof}

Note that $| \delta |$ is precisely the distance along the fold from the node to the intersection of the true $z$ nullcline with the fold at $x = -1$.  If $\delta > 0$, which is required by condition (b), then the node lies under the $z$ nullcline on $M_0$.  That is, if $z_n$ is the intersection of the $z$ nullcline with the fold (i.e., $z_n = h(-1) + \lambda$), then $z_n > z_-$ with $z_n =  z_- + \delta$.  As we will see, the parameter $r$ will not appear in the remaining calculations.  Thus we strive to find conditions on $\delta$, $a$, and $p$.  Choosing values that satisfy those conditions, (c) then provides an upper bound on $r$.

\begin{remark}
 In each of the limiting cases $r \rightarrow 0$ and $\delta \rightarrow 0$, the Jacobian \eqref{jacobian} will have a zero eigenvalue and the system will have a folded saddle-node of type II.  Near the $r=0$ limit we are in the three time-scale case with a global three time-scale separation.  Near the $\delta = 0$ limit, we have a local three time-scale split at the folded singularity.  In either case, the ratio of eigenvalues $\mu$ will be small, so near the saddle-node limit, we use the theory for $\mu = \mathcal{O}(\epsilon^{1/2})$.
 \end{remark}

Having found conditions for a folded node, it remains to be shown that these conditions are consistent with a return mechanism satisfying $(A4)$ and $(A5)$ from Theorem \ref{thm:assumptions}.  As indicated by $(A4)$, the singular funnel is a vital component of the global return mechanism.  Typically, the functionality of the return mechanism is demonstrated numerically \cite{kuehn11, rubinwechs}.  This is done by choosing a set of reasonable parameters and then varying one parameter until the MMO orbit disappears.  For example we set parameters to have the following values: $p=3, \ b=2.1, \ k =4, \ r=1, \ m=1, \ \lambda=1,$ and we vary $a$.  If $a \approx 0.643$, we have that $\delta \approx 0$ and we are in the SN-II case.  From this value we increase $a$ to 0.8 to obtain the singular orbit in Figure \ref{mmoFig:singularOrbit}.  Figure \ref{mmoFig:timeseries} shows stable MMO time series for these parameters away from the singular limit.  Continuing to increase $a$, we see that when $a \approx 0.823$, the singular periodic orbit lands on the strong canard.  For $a \geq 0.824$, there is no MMO orbit since the return mechanism no longer sends trajectories into the funnel. 

In the following subsections, we use approximations to obtain analytical results and prove our main result.  The strategy is to linearly approximate strong canard and show that the approximation lies within the funnel.  Then we find conditions under which the return mechanism lands in the approximated funnel region.

 \subsection{Estimate of the Funnel}
 Assuming the node conditions (a)-(d) from Lemma \ref{thm:node} are met, the folded singularity will have a strong stable eigenvalue (eigenvector) and a weak stable eigenvalue (eigenvector).  Let $\mu_{s,w}$ be the eigenvalues, where $s$ and $w$ denote strong and weak, respectively.  Then
$$\mu_s < \mu_w < 0.$$
Also let $(x_{s,w},z_{s,w})$ denote the corresponding eigenvector.  A simple computation shows the slope of the eigenvector
$$ m_i = \frac{z_i}{x_i} = \frac{-6r \delta }{\mu_i} > 0,$$
where $i$ can be either $s$ or $w$.  Then we have the following relationships 
$$0 < m_s < m_w < -f'(-1),$$
where $-f'(-1)$ is the slope of the $x$ nullcline at the node.  Recall that the singular funnel is the region bounded by the fold $L^-$ and the strong canard $\gamma_s$ (the trajectory that approaches the node with slope $m_s$) that contains the weak canard.  In our case, locally near the folded node, the funnel will lie below the strong canard.   Following $\gamma_s$ away from the node in reverse time, we see that if $\gamma_s$ intersects the $x$ nullcline, it will turn down and to the right until it intersects the fold $L^-$.  We want to avoid this situation since it effectively precludes a global return mechanism.  However, if $\gamma_s$ intersects the $z$-nullcline, then it will continue up and to the left in reverse time as in Figure \ref{mmoFig:zStar}.  The following lemma provides conditions under which $\gamma_s$ lies entirely above its tangent line at the node, allowing us use a linear approximation to find a lower bound for the intersection of $\gamma_s$ with $P(L^+)$.  

\begin{figure}[t]
	\begin{center}
	\includegraphics[width=0.7\textwidth]{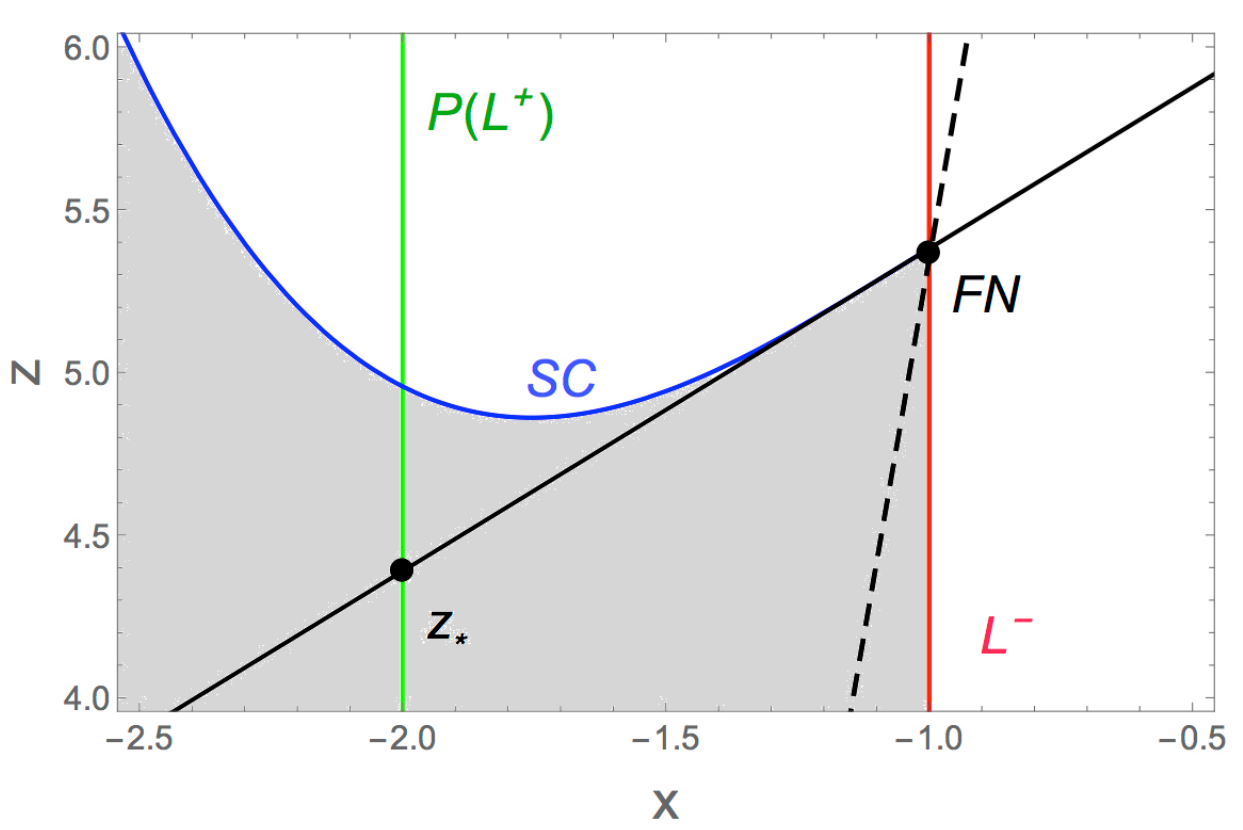}
	\caption{A lower bound for the edge of the funnel.  The fold line $L^-$ is again in red.  $P(L^+)$, drawn in green, is the set where fast trajectories leaving $L^+$ land on $M_A^-$.  The solid black line containing $z_*$ is the strong eigendirection, and the dashed line is the weak eigendirection along which trajectories inside the funnel approach FN.}
	\label{mmoFig:zStar}
	\end{center}
\end{figure} 

\begin{lemma}
	\label{thm:funnel}
		Let equations \eqref{xp}-\eqref{zp} satisfy the conditions (a)-(d) of Lemma \ref{thm:node}.  Furthermore assume
		\begin{enumerate}[(a)]
		\setcounter{enumi}{4}
			\item $\displaystyle{\frac{2 p^2 (a+1)^2}{\delta} + 2pa-6(m+1) < 0}$, and
			\item $p(a+1) > 2.$
		\end{enumerate}
		Let $m_s$ denote the slope of the strong eigenvector to the node.  That is
		$$ m_s = \frac{6 r \delta}{ - \mu_s}=\frac{6 r \delta}{ | \mu_s |}.$$
		Then the strong canard $\gamma_s$ is tangent to the line $ z = m_s (x+1) + z_-$ at $x = -1$, and lies above the line for $x < -1$.
\end{lemma}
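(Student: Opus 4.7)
My plan is to treat tangency and the ``lies above'' claim separately. The tangency at $x=-1$ is immediate: by conditions (a)--(d) of Lemma \ref{thm:node}, the point $(-1,z_-)$ is a stable node of the desingularized system \eqref{desing}, and $\gamma_s$ is by construction the strong stable manifold through it, hence tangent to the strong eigenvector. Reading the slope off the linearization \eqref{jacobian}, this eigenvector has slope $m_s=-6r\delta/\mu_s=6r\delta/|\mu_s|$, so $\gamma_s$ is tangent to the line $\ell : z=m_s(x+1)+z_-$ at $(-1,z_-)$.

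For the ``above $\ell$'' statement I would first verify the local version at the node. Parametrize $\gamma_s$ as $z=g(x)$ near $x=-1$ and substitute into the invariance equation
\begin{equation*}
g'(x)\bigl[f(x)-(m+1)h(x)-\lambda+g(x)\bigr]=r\,h'(x)\bigl[\lambda+h(x)-g(x)\bigr].
\end{equation*}
Matching Taylor coefficients in $u=x+1$ at $u=0$, the order-$u$ relation reproduces $m_s\mu_s=-6r\delta$, and the order-$u^2$ relation yields
\begin{equation*}
g''(-1)=\frac{6r\delta+12rm_s-2m_s[p+3(m+1)]}{3m_s-4p(1+a)}.
\end{equation*}
Since $\mu_s<0$ forces $m_s<p(1+a)$, the denominator is negative. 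Condition (d) rewrites as $12r<2p^2(1+a)^2/\delta$, and combined with condition (e) this gives $12r+2pa<6(m+1)$, hence $12r+|\mu_s|<12r+2p(1+a)<2p+6(m+1)$. After multiplying through by $m_s>0$ and using $m_s\mu_s=-6r\delta$, this is exactly the inequality making the numerator of $g''(-1)$ negative, so $g''(-1)>0$ and $\gamma_s$ lies strictly above $\ell$ for $x$ just below $-1$.

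To extend the local inequality to the full range $x<-1$, I would use a barrier argument. Set $\psi(x,z):=z-m_s(x+1)-z_-$ and compute $\dot\psi|_\ell=\dot z-m_s\dot x$ after substituting $z=m_s(x+1)+z_-$ into \eqref{desing}. The linear-in-$u$ part vanishes by tangency; the quadratic part equals one half of the numerator of $g''(-1)$, and therefore has the same negative sign under (d),(e), giving $\dot\psi|_\ell<0$ for $u<0$ close to $0$. A short expansion shows $\ell$ minus the $x$-nullcline equals $\mu_s u+[p+3(m+1)]u^2-(m+1)u^3$, each summand of which is positive for $u<0$; hence $\ell$ sits above the $x$-nullcline and $\dot x>0$ on $\gamma_s$ throughout this region. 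Now suppose for contradiction that $\gamma_s$ first exits $\{\psi>0\}$ at some $x_*<-1$; forward-time passage through $x_*$ (with $\dot x>0$) forces $\psi$ to increase through $0$ there, so $\dot\psi|_\ell(x_*)\geq 0$, contradicting $\dot\psi|_\ell<0$. Condition (f), $p(a+1)>2$, would be invoked to dominate the cubic, quartic and quintic corrections in the $u$-expansion of $\dot\psi|_\ell$ and thereby extend its negativity past the purely quadratic regime to the whole range of $x<-1$ of interest.

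The main obstacle is precisely this global step. The local sign of both $g''(-1)$ and the leading $u^2$ coefficient of $\dot\psi|_\ell$ fall out of the same clean inequality controlled by (d)+(e), but pinning down the role of (f) in controlling the higher-order corrections---and thus ruling out a sign flip of $\dot\psi|_\ell$ far from the node---is the most delicate part of the argument.
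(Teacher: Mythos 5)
Your proof follows essentially the same structure as the paper's. Both establish tangency directly from the linearization, compute the concavity of $\gamma_s$ at the node (you via Taylor matching in the invariance equation, the paper via repeated L'H\^opital applied to $d^2z/dx^2$; these agree---your $g''(-1)$ equals the paper's limit $\mathcal{L}$ after multiplying numerator and denominator by $2/|\mu_s|$), and both derive positivity of that second derivative from the chain $12r+|\mu_s|<12r+2p(1+a)<2p+6(m+1)$ using (d) and (e). For the global step, you and the paper both use a barrier argument along the tangent line: the paper considers $\dot C|_{C=z_-}$ with $C=z-m_sx$, asserts it is increasing in $x$ under condition (f), and concludes it is nonpositive for $x\le -1$; your $\dot\psi|_\ell$ computation is the same object, and your contradiction argument is equivalent to the paper's monotonicity claim. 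The one place you flag as delicate---justifying that (f) controls the full quintic expansion of $\dot\psi|_\ell$ and not just the $u^2$ term---is also left as an assertion in the paper ("when $p(a+1)>2$, then $\dot C|_{C=z_-}$ is increasing as a function of $x$"), so your proposal is no less complete on that point than the source.
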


The method of proof is to show that $\gamma_s$, thought of as $z = \gamma_s (x)$, is concave up at the node $(-1,z_-)$.  This shows that $\gamma_s$ lies above the line 
$$z  - m_s (x+1) = z_-$$
near the folded node.  We will then consider the direction of the vector field along the line to show that $\gamma_s$ remains above the line.

\begin{proof}
The $z$ coordinate of the strong canard tends to $z_-$ as $x \rightarrow -1$, however since the point $(-1,z_-)$ is a node, there are many trajectories that do so.  The strong canard can be characterized as the trajectory whose slope tends to $m_s$ as $x \rightarrow -1.$  That is 
$$\lim_{x \rightarrow -1} \frac{dz}{dx} = \frac{6 r \delta}{ | \mu_s |}.$$
The concavity of the strong canard determines whether it approaches its tangent line from above or below.  We begin with the first derivative,
$$\frac{dz}{dx} =\frac{r h'(x) (\lambda+h(x) -z)}{f(x)-(m+1)h(x) - \lambda + z}.$$  
To assist us in the calculations, we define
\begin{align*}
	\eta(x) &= f(x)-(m+1)h(x)-\lambda+z \\
	\phi(x) &= \lambda + h(x) -z
\end{align*}
noting that 
\begin{equation*}
	\begin{array}{lll}
		\phi(-1) = \delta, 
			& \phi'(-1) = \displaystyle \frac{- 6 r \delta}{ | \mu_s |},
			& \eta(-1) =0, \\  [\bigskipamount]
		h'(-1) =0, 
			& h''(-1) =-6,
			& h'''(x)  =6
	\end{array}
\end{equation*}
and
$$\lim_{x \rightarrow -1} \frac{h'(x)}{\eta(x)} = \frac{6}{|\mu_s|}.$$

Now, we use the quotient rule to obtain:
$$ \frac{d^2 z}{dx^2} = \frac{r}{(\eta(x))^2} \left[ \eta(x) \left( h'(x) \left(h'(x) - \frac{dz}{dx} \right) + \phi(x) h''(x) \right)-h'(x) \phi(x) \eta'(x) \right].$$
In particular, we are interested in 
$$ \mathcal{L} = \lim_{ x \rightarrow -1} \frac{d^2 z}{dx^2}.$$
Using L'Hopital's rule, we see
\begin{align*}
	 \mathcal{L} &=\lim_{x \rightarrow -1} \left[ \frac{r}{2 \eta(x) \eta'(x)} \right. \\
	 & \cdot \left[  \eta(x) \left( h'(x) \left( h''(x) - \frac{d^2 z}{dx^2} \right) +  h''(x) \left( h'(x) - \frac{dz}{dx} \right) 
			+ \phi(x) h'''(x)  + h''(x) \phi(x) \right) \right. \\
		& + \eta'(x) \left( h'(x) \left(h'(x)- \frac{dz}{dx} \right) + \phi(x) h''(x) \right) - \eta'(x) \phi(x) h''(x) \\
		& \left. \left. - h'(x) \left( \phi(x) \left(f''(x) - (m+1) h''(x) + \frac{dz}{dx} \right) + \phi'(x) \eta'(x) \right) \right] \right], 
\end{align*}
which simplifies to
$$ \mathcal{L} = \frac{3r}{\eta'(-1)}\left( \frac{12r\delta}{| \mu_s |} + \delta \right) - \frac{3r}{| \mu_s|} \left(\frac{6r\delta}{|\mu_s|} \right) -\frac{3r}{| \mu_s|} \left( \frac{\delta (2 p +6(m+1)+ \mathcal{L} )}{\eta'(-1)} - \frac{6r\delta}{| \mu_s| } \right). $$
Simplifying further and gathering the $\mathcal{L}$ terms on one side gives
\begin{equation}
	\label{concavityL}
	\frac{\eta'(-1) | \mu_s | + 3 r \delta}{3 r \delta} \mathcal{L} = 12 r + | \mu_s| -2p - 6 (m+1).
\end{equation}
Using the node conditions---specifically the bound on $r$ from condition (d)---we can show the coefficient of $\mathcal{L}$ is negative since
\begin{align*}
	\eta'(-1) | \mu_s | + 3 r \delta &= \left( -2p(a+1) + \frac{6r\delta}{| \mu_s | } \right) \mu_s + 3 r \delta \\
			&= 9 r \delta - 2p (a+1) | \mu_s | \\
			&< \frac{3}{2} p^2 (a+1)^2 - 2 p^2 (a+1)^2 - 2 p (a+1) \sqrt{p^2 (a+1)^2 - 6 r \delta} \\
			& < 0.
\end{align*}
Since we are looking for a lower bound on the edge of the singular funnel, we want the strong canard to lie above its tangent line at the node.  So, we want $\mathcal{L} > 0$, which happens when the right-hand side of \eqref{concavityL} is negative.  That is, we want
\begin{equation}
\label{concavity2}
 12 r + | \mu_s| - 2p - 6(m+1) < 0.
 \end{equation}
Using the bound for $r$ again as well as the estimate $ | \mu_s| < 2 p ( a+1)$, we see that \eqref{concavity2} will be true if
\begin{equation}
\label{concavityFin}
\frac{2 p^2 (a+1)^2}{\delta} + 2pa-6(m+1) < 0.
\end{equation}
Therefore condition (e) implies that $\gamma_s$ lies above the line $z  - m_s (x+1) = z_-$ near the node.  

Next, we want to show that it remains above the line moving away from $L^-$ in reverse time.  To do so we consider the vector field on lines of the form
\begin{equation*}
	C = z - m_s x.
\end{equation*}
In particular, we look for conditions such that 
\begin{equation}
	\label{cdown}
	\dot{C}|_{C= z_-} \leq 0.
\end{equation}
When this happens, $\gamma_s$ must be repelled away above the line in reverse time.  Obviously, $\dot{C} = 0$ at the node $(-1,z_-)$.  When 
$$p(a+1)>2,$$ then $ \dot{C}|_{C= z_-} $ is increasing as a function of $x$ and the condition in \eqref{cdown} is satisfied.  Thus conditions (e) and (f) together ensure that $\gamma_s$ lies above the line $z  - m_s (x+1) = z_-$ on $M^-_A.$
\end{proof}

We define $z_*$ to be the intersection of the line $x=-2$ (i.e. $P(L^+)$) with the linear approximation of the funnel, $z  - m_s (x+1) = z_-$ as shown in Figure \ref{mmoFig:zStar}.  Lemma \ref{thm:funnel} ensures that $z_*$ lies in the interior of the funnel.  As we construct the singular periodic orbit, $z_*$ provides a target for trajectories returning from $M_A^+$.
 
 \subsection{Singular Periodic Orbit} 
We now seek conditions so that a singular orbit leaves the folded node, lands on $M^+_A$ along $P(L^-)$, follows a trajectory of the reduced problem towards $L^+$, crosses $L^+$ transversely, and returns to $M^-_A$ on $P(L^+)$ below $z_*$.  Singularities on $L^+$ and $M_A^+$ will play a major role in determining conditions that guarantee the existence of the singular periodic orbit.

We will define $z_+$ to be the $z$ coordinate of the folded singularity on $L^+$, so
\begin{align*}
	z_+ &= (m+1) h(1) + \lambda - f(1) \\
	&= (m+1) (k-2) + \lambda - p(1-a)^2 + b.
\end{align*}
If $z_+$ lies above the $z$ nullcline, then there will be a region where trajectories cross $L^+$ transversely as depicted in Figure \ref{mmoFig:m0pp}.  

\begin{figure}[t]
	\begin{center}
	\includegraphics[width=\textwidth]{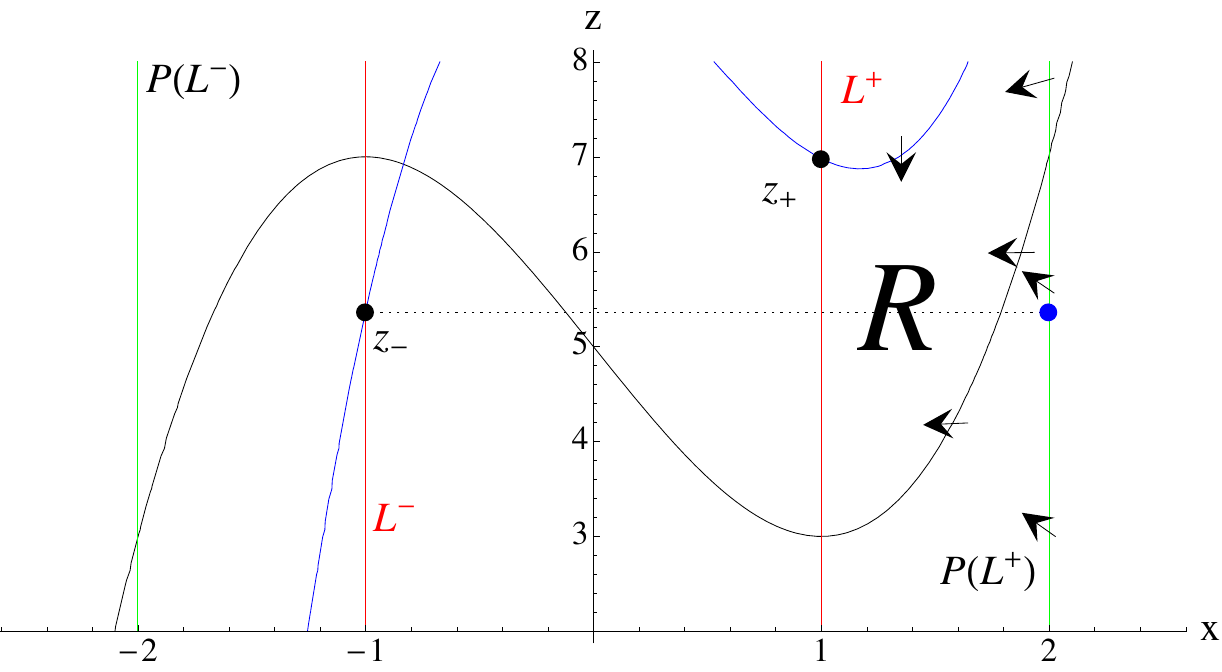}
	\caption{Position of nullclines in the singular limit when $a=0.8, \ p=3, \ b = 2.1, \ k = 4, \ r = 1, \ m= 1,$ and $\lambda=1$.  The blue curve denotes the $x$ nullcline, and the black curve denotes the $z$ nullcline.  The blue point is the landing point of the singular orbit from the node.  The region $R$ between the nullclines on $M_A^+$ is locally positively invariant.}
	\label{mmoFig:m0pp}
	\end{center}
\end{figure}

\begin{lemma}
\label{thm:transverse}
Let equations \eqref{xp}-\eqref{zp} satisfy the conditions of Lemmas \ref{thm:node} and \ref{thm:funnel}.  Furthermore, assume $\delta < 4$ and 
	\begin{enumerate}[(a)]
		\setcounter{enumi}{6}
		\item $4 (a p - m) - \delta > 0$
		\item $\Delta_{\delta} (a, p, m) < 0,$
	\end{enumerate}
	where 
	\begin{align}
		\label{cubDisc}
		\Delta_{\delta} (a, p, m) = & p^2 (-3 m + 2 a p)^2 - 4 m (-3 m + 2 a p)^3 \\
		& + 4 p^3 (-\delta - 2 m + p + 2 a p) - 18 m p (-3 m + 2 a p) (-\delta - 2 m + p + 2 a p) \\
		& - 27 m^2 (-\delta - 2 m + p + 2 a p)^2. \notag
	\end{align}
	Then the singular orbit from the folded node will land on $P(L^-) \subset M^-_A$, follow a trajectory of the reduced problem \eqref{desing}, and cross the fold $L_+$.
\end{lemma}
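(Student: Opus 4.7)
The plan is to carry the singular orbit through the three natural stages after it leaves the folded node: the fast jump onto $M_A^+$, the slow drift toward $L^+$, and the fold crossing. Stage one is immediate. The fast fiber through $(-1,\,2+k,\,z_-)$ is horizontal in $y$ and $z$, and solving $h(x)=2+k$ via $x^3-3x-2=(x-2)(x+1)^2=0$ identifies $x=2$ as the unique attracting intersection. Hence the orbit lands at $P(L^-)\cap M_A^+$ at the point with $x=2$ and $z=z_-$.

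The bulk of the work is a cubic analysis of $g(x):=f(x)-m\,h(x)$. Ordinary equilibria of the desingularized reduced system \eqref{desing} on $M_0$ lie at $\{g(x)=0,\ z=h(x)+\lambda\}$, and the $x$- and $z$-nullclines on $M_A^+$, namely $z_x(x)=(m+1)h(x)+\lambda-f(x)$ and $z_z(x)=h(x)+\lambda$, satisfy $z_x-z_z=-g$. Using the factorizations $h(x)-h(-1)=(x-2)(x+1)^2$ and $f(x)-f(-1)=p(x+1)(x-2a-1)$, one computes $g(-1)=\delta$, $g(1)=\delta-4(ap-m)$, and $g(2)=\delta+3p(1-2a)$. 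Hypothesis (g) is exactly $g(1)<0$, and a direct (if tedious) expansion shows that $\Delta_{\delta}$ is the classical discriminant of the cubic $g$. Hypothesis (h) then forces $g$ to have exactly one real root, which by the intermediate value theorem must lie in $(-1,1)$; since $g$ has leading coefficient $-m<0$, it follows that $g(x)<0$ for all $x\ge 1$. In particular $M_A^+$ contains no ordinary singularities, $z_x>z_z$ throughout $M_A^+$, and the inequality $g(2)<0$ together with $\delta>0$ yields the auxiliary fact $a>1/2$.

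Stages two and three follow from the nullcline picture. The landing point satisfies $z_--z_z(2)=-\delta<0$ and $z_--z_x(2)=3p(1-2a)<0$, so $(2,z_-)$ lies strictly below both nullclines. Because $h'(x)>0$ on $M_A^+$ and $\partial\eta/\partial z=1>0$, below $z_x$ we have $\dot{x}=z-z_x<0$, while below $z_z$ we have $\dot{z}>0$. The $x$-nullcline cannot be crossed from below: on $z_x$ the vector field is $(0,\,rh'(x)g(x))$, which is strictly downward since $g<0$ and $h'>0$, so the region $\{z<z_x,\ x>1\}$ is forward invariant. Consequently $\dot{x}$ is bounded away from zero on compact subsets of $\{1<x\le 2\}$, so $x$ decreases monotonically to $1$ in finite time, arriving at a point $(1,z^*)$ with $z^*<z_x(1)=z_+$. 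The slow-field value is then $\eta(1,z^*)=z^*-z_+<0$, which establishes transverse crossing of $L^+$.

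The principal obstacle is the algebraic identification of $\Delta_{\delta}$ with the cubic discriminant of $g$ and the careful use of (h) to rule out extra real roots; once $g(x)<0$ on $\{x\ge 1\}$ is in hand, the rest reduces to a forward-invariance argument of the standard fast/slow type.
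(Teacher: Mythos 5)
Your proof is correct and follows essentially the same route as the paper's: interpret condition (g) as a sign condition on the cubic $g=f-mh$ at $x=1$, identify $\Delta_\delta$ as the cubic discriminant of $g$ (the paper writes $mh-f=-g$, which has the same discriminant) so that (h) plus the intermediate value theorem places the unique real root in $(-1,1)$, and then use the resulting nullcline ordering $z_x>z_z$ on $M_A^+$ to push the singular orbit from the landing point $(2,z_-)$ across $L^+$. Your forward-invariance argument for $\{z<z_x,\ x>1\}$ is a slightly cleaner packaging than the paper's locally invariant strip $R$ (and sidesteps the role the paper gives to $\delta<4$ in forcing the orbit into $R$); the one soft spot is the remark that $\dot x$ is ``bounded away from zero on compact subsets,'' which does not follow immediately, though the conclusion that the orbit reaches $L^+$ is still justified by the absence of equilibria on $M_A^+$ together with strict monotonicity of $x$.
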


\begin{remark}
The condition that $\delta < 4$ will be replaced with a stricter condition in Lemma \ref{thm:return} to ensure that the singular orbit returns to the funnel.
\end{remark}

\begin{proof}
The intersection of the $z$ nullcline with $L^+$ occurs at $z = h(1) + \lambda$.  Therefore, the region $R$ between the nullclines on $M^+_A$ will be locally positively invariant if
$$ z_+ = (m+1) h(1) + \lambda -f(1) > h(1) + \lambda, $$ 
which happens if and only if
\begin{align*}
	 0&< m h(1) - f(1) \\
	  \Leftrightarrow 0&< m (k-2) - p (1-a)^2 + b \\
	 \Leftrightarrow  0&<4 (ap - m) - \delta.
\end{align*}
Thus condition (g) gives us that the nullclines are aligned as in Figure \ref{mmoFig:m0pp} along $L^+$, and the positively invariant region exists. Any trajectory that enters $R$ can only escape by crossing $L^+$.  Next, we show that the singular orbit from the node enters $R$.  

The assumption that $\delta < 4$ ensures that $z_- >  h(1) + \lambda$. This is because the fast fiber from the folded node on $L^-$ lands on $P(L^-) \subset M^+_A$ exactly the distance $\delta$ below the $z$ nullcline.  At the landing point (denoted by a blue dot in Figure \ref{mmoFig:m0pp}, the vector field of \eqref{desing} points up and to the left.  If the $x$ nullcline lies above the $z$ nullcline, then the trajectory will continue up and to the left until it enters $R$.  Condition (g) implies that the $x$ nullcline lies above the $z$ nullcline at the fold.  Thus, the only way for the nullclines to switch their orientation is for them to intersect, creating a true equilibrium of \eqref{desing}.  The nullclines intersect wherever the curves $z = h(x) + \lambda$ and $z = (m+1) h(x) + \lambda - f(x)$ intersect.  That is, intersections occur whenever
$$m h(x) - f(x) =0.$$
Note that $m h(x) - f(x)$ is a cubic.  Therefore, the number of zeroes of $m h(x) - f(x)$ is determined by the cubic discriminant, which is precisely the quantity $\Delta_{\delta}.$

If $\Delta_{\delta} < 0$ there is only one intersection, but if $\Delta_{\delta} > 0$ there are three. Condition (c)  implies the $x$ nullcline lies below the $z$ nullcline on $L^-$ (i.e. where $x= -1$), and condition (g) implies the $x$ nullcline lies above the $z$ nullcline on $L^+$ (i.e. where $x=+1$).  By the Intermediate Value Theorem, the nullclines will intersect for some $x$ such that $-1 < x <1$.  Therefore, the conditions (g) and (h) prevent there from being an intersection on either stable branch of $M_0$.  This implies a singular trajectory through the folded node will cross  $L^+$.
\end{proof}

\begin{remark}
In fact, the condition $\delta > 0$ precludes true equilibria on $M_A^-$.  This can be seen by comparing the slopes of the $x$ and $z$ nullclines on $M_A^-$.  The $x$ nullcline is the curve $z = (m+1) h(x) - \lambda - f(x)$, so it has slope
\begin{align*}
	\frac{dz}{dx} &= (m+1) h'(x) - f'(x) \\
	&=(m+1) h'(x) - 2p(x-a)\\
	& > (m+1) h'(x),
\end{align*}
since $x \leq-1$ on $M_A^-$.  Meanwhile, the $z$ nullcline is given by the equation $z = h(x) + \lambda$ which has slope
$$ \frac{dz}{dx} = h'(x).$$
Since an equilibrium is precisely the intersection of these curves, any equilibrium on $M_A^-$ will result in the $x$ nullcline crossing the fold above the $z$ nullcline, implying $\delta < 0$.  
\end{remark}

Lemma \ref{thm:transverse} allows for the possibility that the folded singularity $(1,z_+)$ is also a folded node.  If we consider the Jacobian at the point $(1,z_+)$, we see that condition (g) implies $\det(J(1,z_+))>0$.  Therefore, the stability of the folded singularity depends on $f'(1)$.  To exclude the possibility of SAOs along $L^+$, we want to avoid the case where $(1,z_+)$ is a stable folded node.  If $f'(1) > 0,$ then the folded singularity will be unstable.  Requiring $f'(-1) < 0 < f'(1)$ implies that $p>0$ and $-1 < a < 1$.  We update condition (b) from Lemma \ref{thm:node} accordingly, so we now have
$$ (b) \ \ -1< a < 1.$$

Finally, we need to find conditions so that the singular trajectory from the folded node returns to the funnel.  This will show that we in fact have a singular periodic orbit.

\begin{lemma}
	\label{thm:return}
	Let equations \eqref{xp}-\eqref{zp} satisfy the conditions (a)-(h) from Lemmas \ref{thm:node}-\ref{thm:transverse}.  Additionally, suppose the equations satisfy 
	\begin{enumerate}[(a)]
		\setcounter{enumi}{8}
		\item $4 (m+4) - 5ap - p > 0.$
	\end{enumerate}
	Then there is a singular periodic orbit $\Gamma.$
\end{lemma}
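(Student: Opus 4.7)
The plan is to explicitly construct the singular periodic orbit $\Gamma$ piece by piece and check that it closes up. I would start at the folded node $(-1, z_-) \in L^-$ and follow the fast fiber of the layer problem: since $h(-1) = h(2) = k+2$, this fiber lands at $(2, z_-) \in M_A^+$. From there, Lemma \ref{thm:transverse} guarantees that the reduced trajectory passes through the positively invariant region $R$ and crosses $L^+$ transversely at some $(1, z_{\mathrm{cross}})$. The fast fiber through that crossing point then carries the orbit to $(-2, z_{\mathrm{cross}}) \in M_A^-$. The construction closes up precisely when this landing point lies in the singular funnel of the folded node; once it does, the reduced flow on $M_A^-$ funnels it back to $(-1, z_-)$.

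By Lemma \ref{thm:funnel}, the strong canard $\gamma_s$ bounding the funnel lies above its tangent line $z = m_s(x+1) + z_-$, whose value at $x = -2$ is $z_*$. Since the funnel is the region on $M_A^-$ below $\gamma_s$, it suffices to prove $z_{\mathrm{cross}} \leq z_*$. A short algebraic computation yields $z_+ - z_* = m_s - 4(m + 1 - ap)$, so the cleanest route is to bound $z_{\mathrm{cross}} \leq z_+$ (which holds because $R$ has upper edge $z_+$ along $L^+$) and then establish $m_s \leq 4(m + 1 - ap)$. I would try to extract this inequality from condition (i), combined with the a priori estimate $m_s < p(a+1)$ that follows from the discriminant bound $6r\delta < p^2(a+1)^2$ in Lemma \ref{thm:node}(d), since $|\mu_s| = p(a+1) + \sqrt{p^2(a+1)^2 - 6r\delta} > p(a+1)$ forces $m_s = 6r\delta/|\mu_s| < p(a+1)$.

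The hard part will be that these naive estimates do not quite close: the chain $m_s < p(a+1)$ and $p(a+1) \leq 4(m+1-ap)$ requires $p(5a+1) \leq 4(m+1)$, whereas condition (i) only provides $p(5a+1) < 4(m+4)$. Bridging this gap of $12$ will require replacing the crude bound $z_{\mathrm{cross}} \leq z_+$ with a sharper one, since in typical parameter regimes (including that of Figure \ref{mmoFig:singularOrbit}) one has $z_+ > z_-$ and the trajectory must execute a genuine net drop in $z$ on $M_A^+$. My approach would be to track the signed distance $u = h(x) + \lambda - z$ from the $z$-nullcline along the trajectory, noting that the reduced flow on $M_A^+$ together with $\dot z = r(\lambda + h(x) - z)$ yields the linear ODE $\dot u + (r+1) u = -(m h(x) - f(x))$. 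Because the forcing $m h(x) - f(x)$ is positive on $M_A^+$ (a consequence of conditions (g) and (h)), this ODE controls how far $u$ can dip below zero before $x$ reaches $1$; integrating or estimating it should yield the refined bound $z_{\mathrm{cross}} \leq z_*$ under condition (i), from which the existence of $\Gamma$ follows.
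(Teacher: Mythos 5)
Your construction of $\Gamma$ and the reduction to the comparison $z_{\mathrm{cross}} \leq z_*$ is exactly the paper's strategy, and your computation $z_+ - z_* = m_s - 4(m+1-ap)$ is correct. The paper's own proof is a one-line assertion---``Direct calculation shows that $z_+ < z_*$ precisely when $4(m+4) - 5ap - p > 0$''---and you have correctly flagged that this assertion does not follow from the estimates available. As you note, $z_+ < z_*$ is equivalent to $m_s < 4(m+1-ap)$, and the sharpest $r$-independent bound on $m_s$ coming from condition (d), namely $m_s < p(a+1)$, yields the sufficient condition $4(m+1) - 5ap - p \geq 0$, not $4(m+4) - 5ap - p > 0$. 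The paper's ``precisely when'' is moreover untenable on its face: $z_*$ depends on $m_s$ and hence on $r$, while the stated inequality does not, so no $r$-free condition can be \emph{equivalent} to $z_+ < z_*$. In short, you have found what appears to be an error in the paper; the clean fix is to tighten condition (i) to $4(m+1) - 5ap - p > 0$ (or, more precisely, to record the actual requirement $m_s < 4(m+1-ap)$, which involves $r$), at which point the argument closes with no further work.

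Where your proposal diverges from the paper is in the last paragraph: you try to rescue the stated condition (i) by sharpening the bound $z_{\mathrm{cross}} \leq z_+$ via the linear ODE $\dot u + (1+r)u = -(m h(x) - f(x))$ for the signed distance $u = h(x) + \lambda - z$. That ODE is derived correctly, but this route is not what the paper does and is unlikely to bridge the gap of $12$ in general. The issue is that $z_{\mathrm{cross}}$ is only pinned between the two nullcline heights on $L^+$, namely between $h(1)+\lambda$ and $z_+$, and where inside that interval the trajectory actually exits depends on the full reduced dynamics on $M_A^+$; there is no uniform improvement over $z_{\mathrm{cross}} \leq z_+$ available without solving the flow, and in the regime where condition (i) holds but $4(m+1) - 5ap - p < 0$ one can push $m_s$ toward $p(a+1)$ (e.g.\ by increasing $r$ toward the bound in (d)) so that $z_* < z_+$ strictly, making the crude bound genuinely insufficient. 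So the honest conclusion is that the lemma as stated has a gap, not that a refined estimate is hiding in the paper; your diagnosis of the discrepancy is the valuable part.
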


\begin{proof}
By Lemma \ref{thm:node}, we know the system will have a folded node singularity.  Let $\Gamma$ be the singular trajectory consisting of the fast fiber of the layer problem from the singular node to $P(L^-)$.  By Lemma \ref{thm:transverse} we know that the trajectory will follow the slow flow on $M^+_A$ until it crosses $L_+$.  Furthermore, we know that $z_+$ is an upper bound on the $z$ coordinate of the intersection.  If $z_+ < z_*$,  then $\Gamma$ will land in the singular funnel upon leaving $L^+$.  Direct calculation shows that $z_+ < z_*$ precisely when $4 (m+4) - 5ap - p > 0.$
\end{proof}

\subsection{Main Result}
\begin{theorem}
	\label{thm:mainresult}
	Suppose the parameters of the system \eqref{xp}-\eqref{zp} satisfy the conditions (a)-(i).	Then, for $\epsilon$ sufficiently small, the system will have a stable periodic orbit of MMO-type $1^s$ for some $s>0$.  
\end{theorem}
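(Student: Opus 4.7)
The plan is to verify the hypotheses (A1)--(A5) of Theorem \ref{thm:assumptions} in sequence and then invoke that theorem directly. Since Lemmas \ref{thm:node}--\ref{thm:return} have already done the heavy lifting, the proof of Theorem \ref{thm:mainresult} amounts to a careful bookkeeping argument that tracks which hypothesis is secured by which of the conditions (a)--(i), together with one geometric observation about transversality at $L^-$.

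First I would dispatch (A2) and (A3). The S-shape of the critical manifold, $M_0 = M_A^- \cup L^- \cup M_R \cup L^+ \cup M_A^+$, is immediate from the analysis of the cubic $h(x)$ whose critical points are $x=\pm1$; this gives (A2). For (A3), Lemma \ref{thm:node} uses conditions (a)--(d) to place a stable folded node at $(-1,z_-)$ on $L^-$. Next I would address (A5) and (A4) together by constructing the singular periodic orbit $\Gamma$ piece by piece. Starting from the folded node, a fast fiber of the layer problem (a line in the $x$-direction) lands on $P(L^-)\subset M_A^+$; Lemma \ref{thm:transverse}, via (g) and the cubic-discriminant condition (h), shows the reduced trajectory then enters the positively invariant region $R$ and exits by crossing $L^+$ transversally. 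The remaining part of (A5) --- transversality at $L^-$ --- is automatic, because the fast fibers are horizontal in $x$ while $L^\pm=\{x=\pm1\}$ are vertical lines in the $(x,z)$ chart on $M_0$, so every fast drop crosses a fold transversally. Finally, Lemma \ref{thm:return} uses condition (i) to show $z_+ < z_\ast$, where $z_\ast$ is the linear-approximation edge of the funnel from Lemma \ref{thm:funnel}; since conditions (e) and (f) of that lemma guarantee $z_\ast$ lies strictly inside the true funnel (the strong canard bows up from its tangent line), the landing point of $\Gamma$ on $P(L^+)\subset M_A^-$ lies in the interior of the funnel. This closes $\Gamma$ into a singular periodic orbit whose intersection with $M_A^-$ lies inside the funnel, which is exactly (A4).

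At this point the only remaining hypothesis is (A1). Everything in (A2)--(A5) has been verified at $\epsilon=0$ and depends only on the parameters $(a,b,k,p,m,r,\lambda)$; in particular the eigenvalue ratio $\mu = \mu_w/\mu_s$ is computed from the singular Jacobian in \eqref{jacobian} and is a fixed positive number determined by conditions (a)--(d). Consequently one may simply choose $\epsilon>0$ small enough that $\epsilon^{1/2}\ll\mu$. Applying Theorem \ref{thm:assumptions} then produces the stable periodic orbit of MMO-type $1^s$ with $s = [(1+\mu)/(2\mu)]\geq 1$, completing the proof.

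The one genuinely delicate point --- rather than an obstacle --- is the degenerate regime near the folded saddle-node-II limits $r\to 0$ or $\delta\to 0$ discussed in the remark after Lemma \ref{thm:node}. There $\mu = \mathcal{O}(\epsilon^{1/2})$ and hypothesis (A1) of Theorem \ref{thm:assumptions} fails, so one must instead appeal to the extension of Krupa and Wechselberger \cite{kw2010} to retain a stable MMO orbit; the conditions (a)--(i) still do the structural work, but the counting formula \eqref{s} for $s$ must be replaced by the bookkeeping appropriate to that regime. For the generic case, however, the above argument gives (A1)--(A5) and hence the theorem.
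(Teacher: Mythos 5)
Your proposal is correct and takes essentially the same route as the paper: the paper's proof is a one-line statement that Lemmas \ref{thm:node}--\ref{thm:return} verify the hypotheses (A1)--(A5) of Theorem \ref{thm:assumptions}, and you simply spell out that bookkeeping in detail. The extra remark about the folded saddle-node-II limit is a fine observation but is not needed here, since conditions (a)--(d) guarantee a genuine folded node (nonzero, distinct real eigenvalues), so $\mu$ is a fixed positive constant and (A1) is achieved by shrinking $\epsilon$, exactly as the paper notes just before the lemmas.
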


\begin{proof}
Lemmas \ref{thm:node}-\ref{thm:return} show that these conditions satisfy the assumptions of Theorem \ref{thm:assumptions}.
\end{proof}

\begin{figure}
        \centering
        \begin{subfigure}[t]{0.45\textwidth}
               \includegraphics[width=\textwidth]{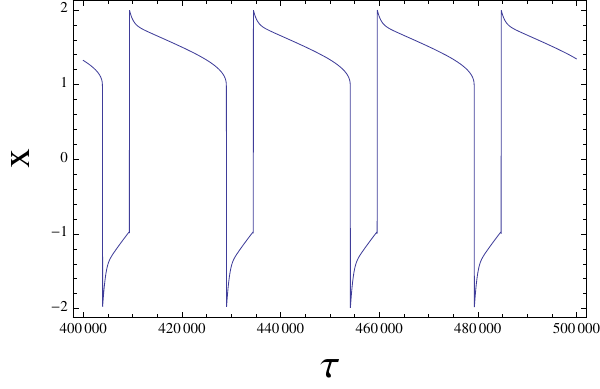}
		\caption{Model output for $x$.}
		\label{mmoFig:output3}
        \end{subfigure}
        ~ 
        \begin{subfigure}[t]{0.45\textwidth}
                \includegraphics[width=\textwidth]{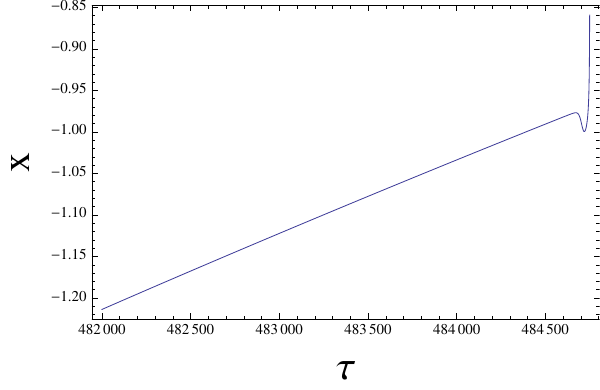}
		\caption{A closer look at the small amplitude oscillations in Figure \ref{mmoFig:output3}}
		\label{mmoFig:sao3}
        \end{subfigure}
        \caption{MMO orbit for $\epsilon=0.001$, $a=0.91$, $p=1.05$, $b=0.31$, $k=2.2$, $r=0.3$, $\lambda=1$, and $m=0.6$.  With these parameters $\delta=1$.}
        \label{mmoFig:e3}
\end{figure}

Figure \ref{mmoFig:apmConc} depicts a portion of parameter space that satisfies conditions (a)-(i) in Theorem \ref{thm:mainresult}.  These conditions place restrictions on $a, \ p, \ m,$ and $r$ explicitly, as well as $b$ and $k$ through the restrictions on $\delta.$  However, there are no restrictions on $\lambda$.  Additionally, Figure \ref{mmoFig:e3} shows the time series for $x$ for a trajectory satisfying the conditions of Theorem \ref{thm:mainresult}.  

\begin{figure}
        \centering
        \begin{subfigure}[b]{\textwidth}
        \centering
                \includegraphics[width=0.6\textwidth]{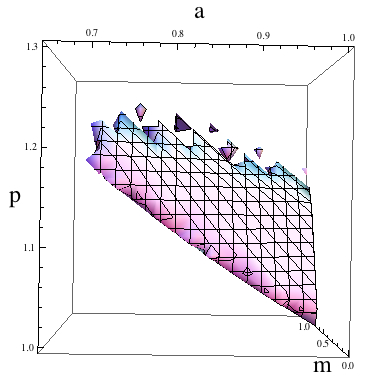}
                \caption{Solid in $apm$-space.}
                \label{mmoFig:apm1}
        \end{subfigure}%
        ~ 
          \\
        \begin{subfigure}[b]{0.45\textwidth}
                \includegraphics[width=\textwidth]{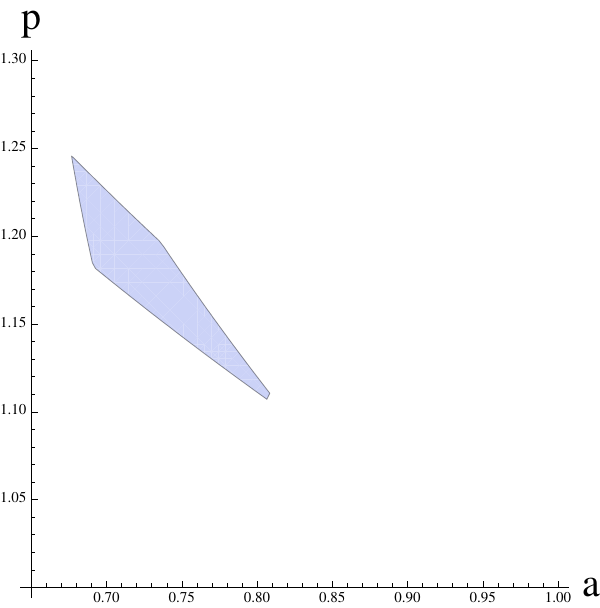}
                \caption{Slice for $m=0.4$.}
                \label{mmoFig:ap14}
        \end{subfigure}
        ~ 
        \begin{subfigure}[b]{0.45\textwidth}
                \includegraphics[width=\textwidth]{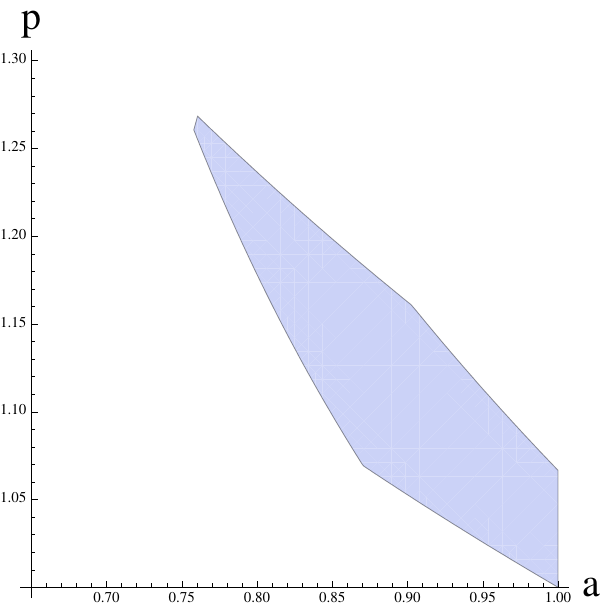}
                \caption{Slice for $m=0.6$}
                \label{mmoFig:ap16}
        \end{subfigure}
        \caption{Parameters n $apm$-space for $\delta=1.3$ that satisfy conditions (a)-(i) from Theorem \ref{thm:mainresult}. }
        \label{mmoFig:apmConc}
\end{figure}

\subsection{Extending the Parameter Regime}
While the conditions (a)-(i) in Theorem \ref{thm:mainresult} are sufficient, they are not all necessary conditions for the model to exhibit MMOs.  In fact, they are rather strict.  This is a direct consequence of linearly approximating the funnel to obtain conditions analytically.  Figure \ref{mmoFig:apmNC} depicts the portion of phase space satisfying only the conditions of Theorem \ref{thm:mainresult} that do not relate to the linear approximation of the funnel.  However, not all parameters from the region pictured in Figure \ref{mmoFig:apmNC} will produce MMO orbits.  

The stable periodic orbits (of some MMO type) outside of the parameter regime described by Theorem \ref{thm:mainresult}, such as the one in Figure \ref{mmoFig:timeseries}, can be much more complicated as a result of the return mechanism projecting the singular periodic orbit closer to the boundary of the funnel (i.e., closer to the strong canard $\gamma_s$).  The behavior in this regime is also described by  Br{\o}ns {\it et al} in \cite{brons}.  For the parameters that generate the orbit in Figure \ref{mmoFig:timeseries}, $\mu \approx 0.1010$ and $s=5$.  However, the MMO signature for the orbit is $1^2$.  This is because the return mechanism sends the trajectory near the boundary of the funnel in the singular limit.  

\begin{figure}
        \centering
        \begin{subfigure}[b]{\textwidth}
        \centering
                \includegraphics[width=0.6\textwidth]{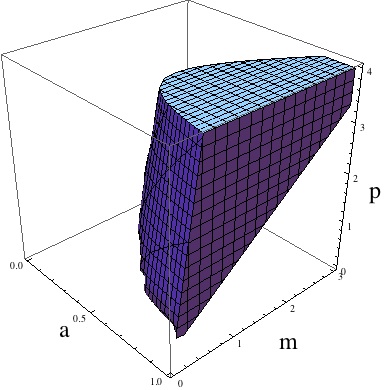}
                \caption{Solid in $apm$-space.}
                \label{mmoFig:apm2}
        \end{subfigure}%
        ~ 
          \\
        \begin{subfigure}[b]{0.3\textwidth}
                \includegraphics[width=\textwidth]{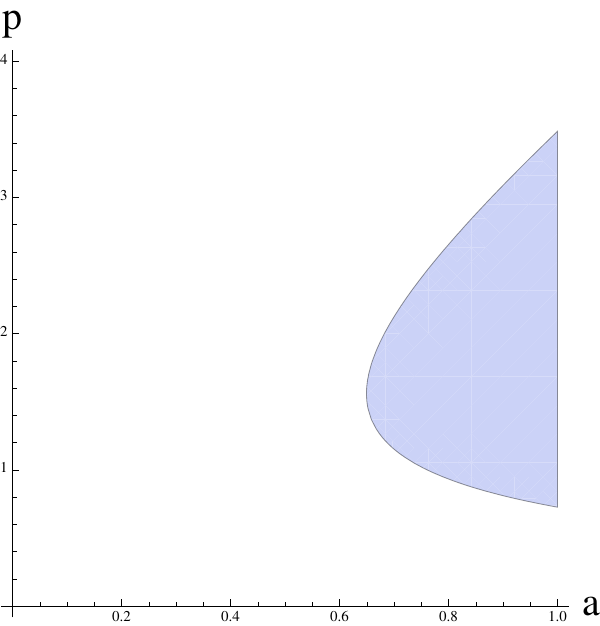}
                \caption{Slice for $m=0.4$.}
                \label{mmoFig:apNC04}
        \end{subfigure}
        ~ 
        \begin{subfigure}[b]{0.3\textwidth}
                \includegraphics[width=\textwidth]{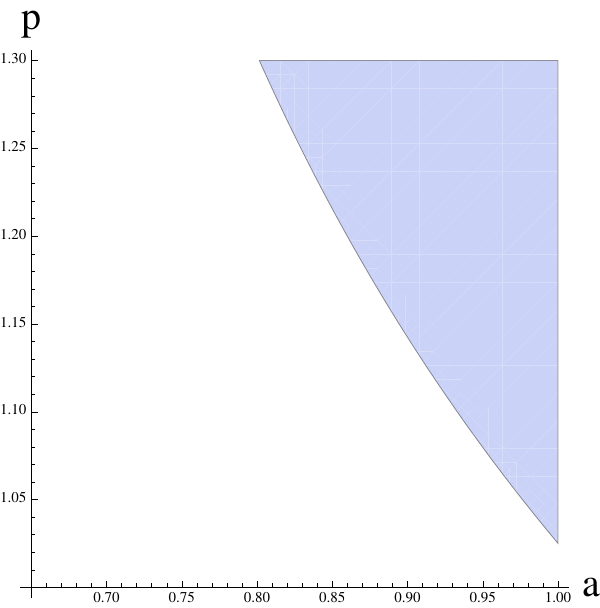}
                \caption{Slice for $m=0.7$}
                \label{mmoFig:apNC07}
        \end{subfigure}
           ~ 
        \begin{subfigure}[b]{0.3\textwidth}
                \includegraphics[width=\textwidth]{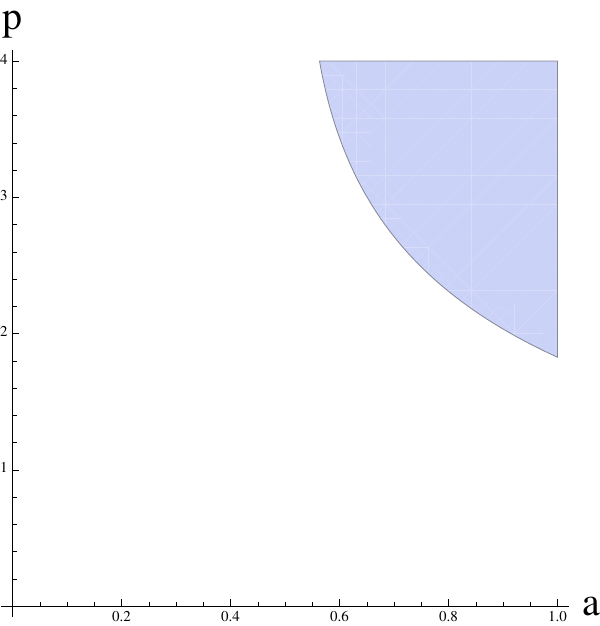}
                \caption{Slice for $m=1.5$}
                \label{mmoFig:apNC15}
        \end{subfigure}
        \caption{Parameters in $apm$-space for $\delta=1.3$ that satisfy conditions (a)-(d), (g), and (h) from Theorem \ref{thm:mainresult}. }
        \label{mmoFig:apmNC}
\end{figure}

\section{Discussion}
We have found sufficient conditions such that the system \eqref{xp}-\eqref{zp} has a stable periodic orbit with MMO signature $1^s$.  To our knowledge, this is the first climate-based model that has been analyzed to demonstrate MMOs.  The dimensionless model is a variant of the Koper model with an added nonlinearity.  As with the standard Koper model, the model has an `S'-shaped critical manifold and a parameter regime with both a folded node and global return mechanism.  Although the additional nonlinearity in the model does not factor into obtaining a folded node, nonlinear effects play a significant role in determining the shape of the funnel, and consequently the return mechanism.  From a mathematical standpoint, it is significant that the additional nonlinearity does not destroy the functionality of the model to produce an MMO pattern.  

\begin{figure}
        \centering
        \begin{subfigure}[b]{0.7\textwidth}
        \centering
                \includegraphics[width=0.6\textwidth]{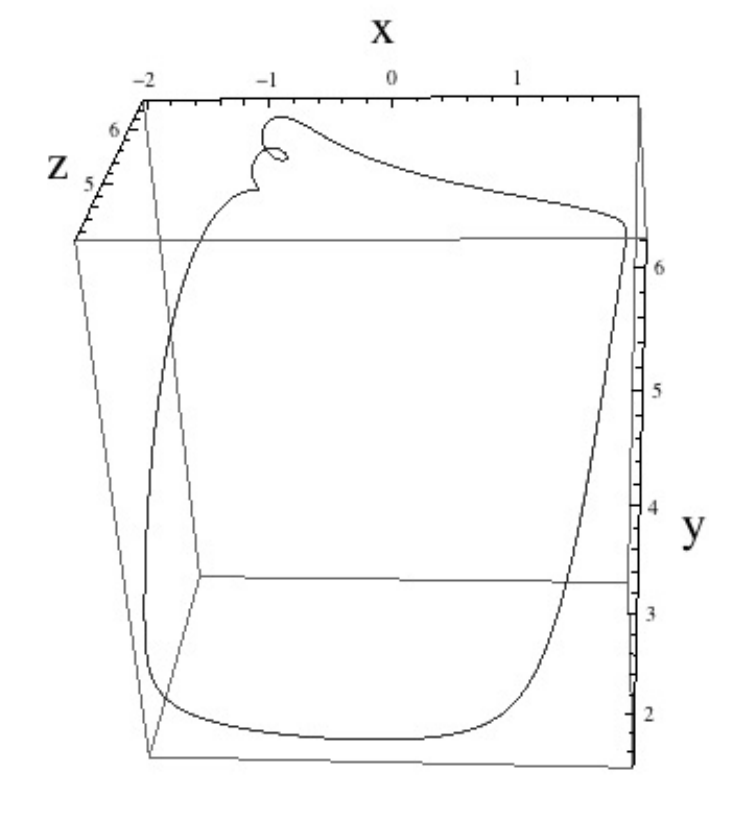}
		\caption{Attracting periodic orbit in the 3D phase space. }
		\label{mmoFig:mmo3d}
        \end{subfigure}%
        \\ 
        \begin{subfigure}[b]{0.45\textwidth}
                \includegraphics[width=\textwidth]{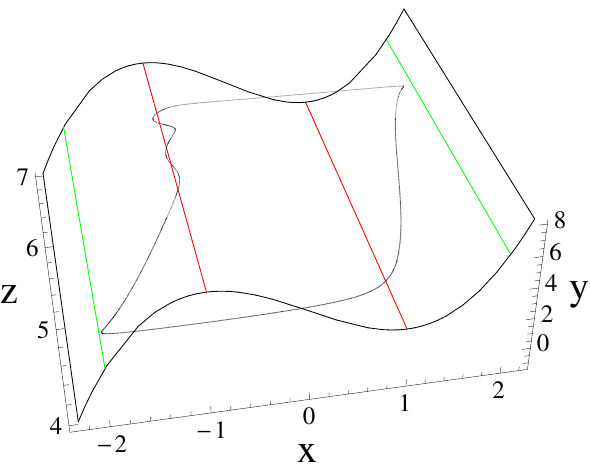}
		\caption{The attracting periodic orbit shown with the critical manifold.}
		\label{mmoFig:trajectory}
        \end{subfigure}
        ~ 
        \begin{subfigure}[b]{0.45\textwidth}
                \includegraphics[width=\textwidth]{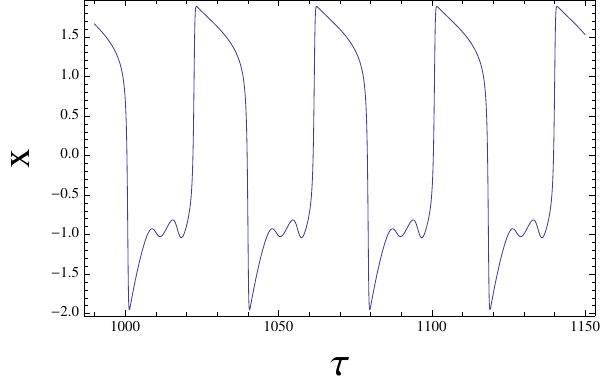}
		\caption{Model output for $x$ for the trajectory in Figure \ref{mmoFig:mmo3d}.}
		\label{mmoFig:output}
        \end{subfigure}
        \caption{MMOs for $\epsilon = 0.1,$ $a = 0.8$, $p = 3,$ $b= 2.1$, $k=4,$ $r=1$, $m=1$, and $\lambda = 1$. }
        \label{mmoFig:e1}
\end{figure}

We are able to find the conditions in Theorem \ref{thm:mainresult} analytically, which is a rarity for MMO problems.  Although it is nice to have an analytical proof, the approach excludes a significant region of parameter space where MMOs can be found.  Numerically approximating the strong canard, the standard practice for demonstrating MMOs, helps provide a more complete picture of the parameter regime that produces MMOs.  The method relies on varying one parameter at a time, making it difficult to actually plot the complete region.

The time series in Figures \ref{mmoFig:tempRecord} and \ref{mmoFig:output} are qualitatively similar in that they both contain large oscillations followed by a series of smaller amplitude oscillations.  Note that $\epsilon=0.1$ in Figure \ref{mmoFig:e1} is not truly small.  In Figures \ref{mmoFig:examples1} and \ref{mmoFig:timeseries} we plotted analogous trajectories with smaller $\epsilon$ (keeping the other parameters fixed).  Figure \ref{mmoFig:singularOrbit} depicts the singular orbit for all of these examples.  We focus on the ``nice'' MMO in Figure \ref{mmoFig:e1} because of its similarity to Figure \ref{mmoFig:tempRecord}.  Since $\epsilon$ is larger, we are able to see the small amplitude oscillations clearly.  

The model can give us some insight about the climate system.  The physical implication of the requirement that $-1 < a < 1$ is that $\text{CO}_2$ drawdown due to terrestrial mechanisms is most efficient at a temperature (or ice volume) somewhere between the stable glacial and interglacial states.  Through the requirements on $\delta$ we learn about the relationship between $b,$ $m,$ and $k$.  This relates the amount of $\text{CO}_2$ removed from the atmosphere when the planet is most efficient at doing so ($b$), the ratio of the timescales of the land-atmosphere carbon flux to that of the ocean-atmosphere exchange ($m$), and the minimum/maximum values of atmospheric carbon ($k$).  Finally, $r$ tells us something about the proportion of carbon in the atmosphere to carbon in the ocean required for the ocean to switch from absorbing to outgassing.  If $r$ is large, we will no longer have a folded node.  It may be the case that $r \ll 1$, which puts us near the folded saddle-node limit and allows for more complicated behavior.  Some simulations with $r \ll 1$ are shown in Figure \ref{mmoFig:threeScales}.  

\begin{figure}
        \centering
        \begin{subfigure}[b]{0.3\textwidth}
        \centering
                \includegraphics[width=\textwidth]{threeScales1.pdf}
		\caption{Example of 3 time-scale series $\epsilon = 0.1,$ $a = 0.8$, $p = 3,$ $b= 2$, $k=4,$ $r=0.05$, $m=1$, and $\lambda = 1$.}
        \end{subfigure}%
        ~ 
        \begin{subfigure}[b]{0.3\textwidth}
                \includegraphics[width=\textwidth]{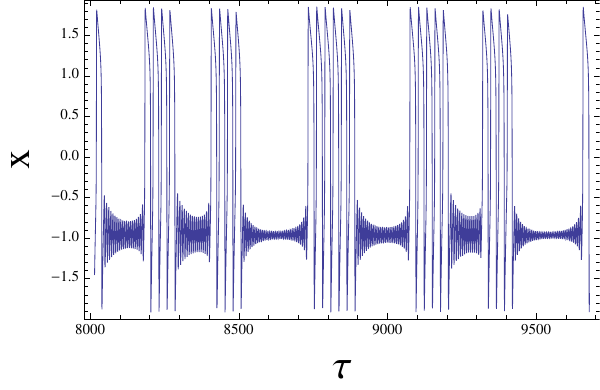}
		\caption{Example of 3 time-scale series $\epsilon = 0.1,$ $a = 0.8$, $p = 3,$ $b= 2.3$, $k=4,$ $r=0.01$, $m=1$, and $\lambda = 1$.}
        \end{subfigure}
        ~ 
        \begin{subfigure}[b]{0.3\textwidth}
               \includegraphics[width=\textwidth]{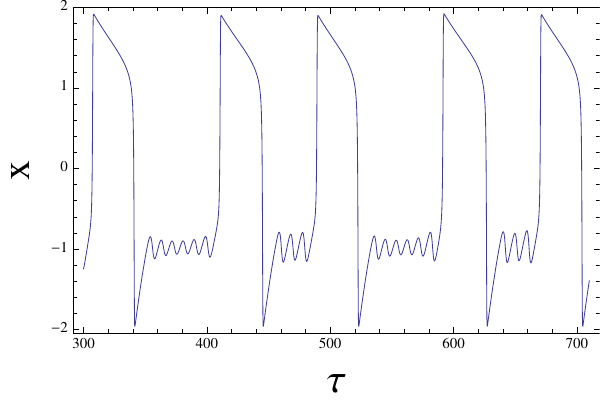}
		\caption{Example of 3 time-scale series $\epsilon = 0.05,$ $a = 0.8$, $p = 3,$ $b= 2.32$, $k=4,$ $r=0.1$, $m=1$, and $\lambda = 1$.}
		\label{mmoFig:c3scales}
        \end{subfigure}
        \caption{Examples of MMO patterns in the three time-scale case. }
        \label{mmoFig:threeScales}
\end{figure}

The analysis required to show MMOs due to a folded node assumes a separation of time scales and (at least) two slow variables.  As mentioned in the introduction and Section 2, it is often difficult to determine exactly which parameters are small enough to perform this analysis.  Here we rely on the wisdom of climate scientists.  It may be that changes in atmospheric greenhouse gases happen on a similar timescale to temperature.  Figure \ref{mmoFig:e1} depicts the case where there is only a marginal time-scale separation and we still see MMOs.  

The power of canard theory lies in its generality. It can be applied to a diverse range of research areas from mathematical physiology, fluid dynamics, magnetohydrodynamics and even climate modelling. It was recently applied to explain the `compost bomb instability' - a potentially catastrophic explosive release of peatland soil carbon into the atmosphere as the greenhouse gas carbon dioxide, which could significantly accelerate anthropogenic global warming \cite{wieczorek11}. The take-home message lies in the realization that folded singularities and associated canards create local transient `attractor' states in multiple scales problems. This is due to the fact that trajectories in the domain of attraction of folded singularities will reach and pass these folded singularities in finite slow time; folded singularities are not equilibrium states. In the context of climate tipping point problems, \cite{wieczorek11} identifies {\it canards of folded saddle type} as {\it threshold manifolds}, while \cite{mitry13} and \cite{wechs13} identify the same canard structure as firing threshold manifolds in the context of neural excitability.

\section*{Acknowledgements}
A.R. and C.J. were supported by the Mathematics and Climate Research Network on NSF grants DMS-0940363 and DMS-1239013.  E.W. was supported in part by the MCRN on NSF grant DMS-0940363 as well as a grant to the University of Arizona from HHMI (52006942).  M.W. was supported by the Australian Research Counsel.  Additionally, we would like to thank Sebastian Wieczorek for his helpful insight in the beginning stages of this project and Michel Crucifix for his helpful comments during the review process.


\nocite{gsp}
\bibliographystyle{amsplain}
\bibliography{sources}

\end{document}